\newcommand{\Ker}{\operatorname{Ker}}
\newcommand{\Spec}{\operatorname{Spec}}
\newcommand{\td}{\operatorname{tr.deg.}}
\renewcommand{\dim}{\operatorname{dim}}
\newcommand{\Max}{\operatorname{Max}}
\newcommand{\h}{\operatorname{ht}}
\newcommand{\fn}{\frak{n}}
\newcommand{\fm}{\frak{m}}
\newcommand{\fq}{\frak{q}}
\newtheorem{thm}{Theorem}[section]
\newtheorem{cor}[thm]{Corollary}
\newtheorem{lem}[thm]{Lemma}
\newtheorem{prop}[thm]{Proposition}
\newtheorem{exam}[thm]{Example}
\newtheorem{rem}[thm]{Remark}
\begin{document}

\bibliographystyle{amsplain}

\date{}

\author{Parviz Sahandi}

\address{School of Mathematics, Institute for
Research in Fundamental Sciences (IPM), P.O. Box: 19395-5746, Tehran
Iran and Department of Mathematics, University of Tabriz, Tabriz,
Iran} \email{sahandi@ipm.ir}

\keywords{Jaffard domain, star operation, $w$-operation, valuative
dimension, $w$-dimension}

\subjclass[2000]{Primary 13G05, 13A15, 13C15}

%\thanks{P. Sahandi was supported in part by a grant from IPM (No.
%88130034)}

\title[W-Jaffard domains in pullbacks]{W-Jaffard domains in pullbacks}

\begin{abstract} In this paper we study the class of $w$-Jaffard domains in pullback constructions,
and give new examples of these domains. In particular we give
examples to show that the two classes of $w$-Jaffard and Jaffard
domains are incomparable. As another application, we establish that
for each pair of positive integers $(n,m)$ with $n+1\leq m\leq
2n+1$, there is an (integrally closed) integral domain $R$ such that
$w$-$\dim(R)=n$ and $w[X]$-$\dim(R[X])=m$.
\end{abstract}

\maketitle

\section{Introduction}

Throughout this paper, $R$ denotes a (commutative integral) domain
with identity with quotient field $qf(R)$, and let $X$ be an
algebraically independent indeterminate over $R$. In \cite[Theorem
2]{S1} Seidenberg proved that if $R$ has finite Krull dimension,
then
$$\dim(R)+1\leq\dim(R[X])\leq2(\dim(R))+1.$$
Moreover, Krull \cite{Kr} has shown that if $R$ is any
finite-dimensional Noetherian ring, then $\dim(R[X])=1+\dim(R)$ (cf.
also \cite[Theorem 9]{S1}). Seidenberg subsequently proved the same
equality in case $R$ is any finite-dimensional Pr\"{u}fer domain. To
unify and extend such results on Krull-dimension, Jaffard
\cite{Jaf1} introduced and studied the \emph{valuative dimension}
denoted by $\dim_v(R)$, for a domain $R$. This is the maximum of the
ranks of the valuation overrings of $R$. Jaffard proved in
\cite[Chapitre IV]{Jaf1} that, if $R$ has finite valuative
dimension, then $\dim_v(R[X])=1+\dim_v(R)$, and that if $R$ is a
Noetherian or a Pr\"{u}fer domain, then $\dim(R)=\dim_v(R)$. In
\cite{ABDFK} Anderson, Bouvier, Dobbs, Fontana and Kabbaj introduced
the notion of \emph{Jaffard domains}, as finite dimensional integral
domains $R$ such that $\dim(R)=\dim_v(R)$, and studied this class of
domain systematically (see also \cite{BK}).

The $v$, $t$ and $w$-operations in integral domains are of special
importance in multiplicative ideal theory and was investigated by
many authors in the 1980's. Ideal $w$-multiplication converts ring
notions such as Dedekind, Noetherian, Pr\"{u}fer, and
quasi-Pr\"{u}fer, respectively to Krull, strong Mori, P$v$MD, and
UM$t$. As the $w$-counterpart of Jaffard domains, in \cite{S}, we
introduced the class of \emph{$w$-Jaffard domains}, as integral
domains $R$ such that $w\text{-}\dim(R)=w\text{-}\dim_v(R)<\infty$.
In this paper we study the transfer of $w$-Jaffard domains in
pullback constructions, in order to provide original examples.

We need to recall some notions from star operations. Let $F(R)$
denotes the set of nonzero fractional ideals, and $f(R)$ be the set
of all nonzero finitely generated fractional ideals of $R$. Let $*$
be a star operation on the domain $R$. For every $A\in F(R)$, put
$A^{*_f}:=\bigcup F^*$, where the union is taken over all $F\in
f(R)$ with $F\subseteq A$. It is easy to see that $*_f$ is a star
operation on $R$. A star operation $*$ is said to be \emph{of finite
character} if $*=*_f$. We say that a nonzero ideal $I$ of $R$ is a
\emph{$*$-ideal}, if $I^*=I$; a \emph{$*$-prime}, if $I$ is a prime
$*$-ideal of $R$; a \emph{$*$-maximal}, if $I$ is maximal in the set
of $*$-prime ideals of $R$. The set of $*$-maximal ideals of $R$ is
denoted by $*$-$\Max(R)$. It has become standard to say that a star
operation $*$ is \emph{stable} if $(A\cap B)^{*}=A^*\cap B^*$ for
all $A$, $B\in F(R)$.

Given a star operation $*$ on an integral domain $R$, it is possible
to construct a star operation $\widetilde{*}$, which is stable and
of finite character defined as follows: for each $A\in F(R)$,
$$
A^{\widetilde{*}}:=\{x\in qf(R)|xJ\subseteq A,\text{ for some
}J\subseteq R, J\in f(R), J^*=R\}.
$$

The most widely studied star operations on $R$ have been the
identity $d$, $v$, $t:=v_f$, and $w:=\widetilde{v}$ operations,
where $A^{v}:=(A^{-1})^{-1}$, with $A^{-1}:=(R:A):=\{x\in
qf(R)|xA\subseteq R\}$. In this work we mostly deal with the
$w$-operation.

It is well-known that $t$-$\Max(R)=w$-$\Max(R)$, every $t$-prime
ideal is a $w$-prime ideal, and that every prime subideal of a
prime $w$-ideal of $R$ is also a $w$-ideal.

Let $*$ be a star operation on a domain $R$. The $*$-{\it Krull
dimension of} $R$ is defined as
$$
*\text{-}\dim(R):=\sup\{n| P_1\subset\cdots\subset P_n\text{
where } P_i \text{ is }*\text{-prime}\}.
$$
If the set of $*$-prime ideals is an empty set then pose
$*\text{-}\dim(R)=0$. Note that, the notions of
$\widetilde{*}$-dimension, $t$-dimension, and of $w$-dimension
have received a considerable interest by several authors (cf. for
instance, \cite{S, Sah, Sbull, H, HM, F1, F2}).

Now we recall a special case of a general construction for semistar
operations (see \cite{S}). Let $X$, $Y$ be two indeterminates over
$R$, and let $K:=qf(R)$. Set $R_1:=R[X]$, $K_1:=K(X)$ and take the
following subset of $\Spec(R_1)$:
$$\Theta_1^w:=\{Q_1\in\Spec(R_1)|\text{ }Q_1\cap R=(0)\text{ or }(Q_1\cap R)^w\subsetneq R\}.$$
Set $\mathfrak{S}_1^w:=R_1[Y]\backslash(\bigcup\{Q_1[Y]
|Q_1\in\Theta_1^w\})$ and:
$$E^{\circlearrowleft_{\mathfrak{S}_1^w}}:=E[Y]_{\mathfrak{S}_1^w}\cap
K_1, \text{   for all }E\in F(R_1).$$

It is proved in \cite[Theorem 2.1]{S} that, the mapping
$w[X]:=\circlearrowleft_{\mathfrak{S}_1^w}: F(R_1)\to F(R_1)$,
$E\mapsto E^{w[X]}$ is a stable star operation of finite character
on $R[X]$, i.e., $\widetilde{w[X]}=w[X]$. If $X_1,\cdots,X_r$ are
indeterminates over $R$, for $r\geq2$, we let
$$
w[X_1,\cdots,X_r]:=(w[X_1,\cdots,X_{r-1}])[X_r].
$$
For an integer $r$, put $w[r]$ to denote $w[X_1,\cdots,X_r]$, and
$R[r]$ to denote $R[X_1,\cdots,X_r]$.

\begin{prop}(\cite[Theorem 3.1]{S})\label{finite} For each positive integer $r$ and for $n:=w\text{-}\dim(R)$ we have
$$r+n\leq w[r]\text{-}\dim(R[r])\leq r+(r+1)n.$$
\end{prop}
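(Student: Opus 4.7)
The plan is to reduce both inequalities to a careful study of how $w[r]$-primes of $R[r]$ contract to $R$.

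\emph{Step 1 (Characterization).} By induction on $r$ I would establish that a prime $Q$ of $R[r]$ is a $w[r]$-prime if and only if $Q\cap R=(0)$ or $(Q\cap R)^w\subsetneq R$. The base case $r=1$ is immediate from the definition of $\Theta_1^w$ together with the fact that the $w[X]$-primes of $R[X]$ are exactly the members of $\Theta_1^w$. For the inductive step, apply the same description with $R[r-1]$ in place of $R$ and $X_r$ in place of $X$: $Q$ is $w[r]$-prime iff $Q\cap R[r-1]=(0)$ or $(Q\cap R[r-1])^{w[r-1]}\subsetneq R[r-1]$, and the induction hypothesis applied to $Q\cap R[r-1]$ yields the desired condition on $Q\cap R$.

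\emph{Step 2 (Lower bound).} Starting from a chain $(0)=P_0\subsetneq\cdots\subsetneq P_n$ of $w$-primes of $R$, I would consider the chain
\[
(0)=P_0[r]\subsetneq P_1[r]\subsetneq\cdots\subsetneq P_n[r]\subsetneq P_n[r]+(X_1)\subsetneq\cdots\subsetneq P_n[r]+(X_1,\ldots,X_r)
\]
in $R[r]$. All these ideals are prime (the quotients are polynomial rings over integral domains) and each contracts to some $P_i$ with $P_i^w\subsetneq R$ or to $(0)$, so by Step~1 each is a $w[r]$-prime. The chain has length $n+r$, proving the lower bound.

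\emph{Step 3 (Upper bound).} Let $Q_0\subsetneq\cdots\subsetneq Q_m$ be any chain of $w[r]$-primes of $R[r]$, and put $P_i:=Q_i\cap R$. By Step~1, together with the fact recalled in the introduction that every prime subideal of a $w$-prime is a $w$-prime, each $P_i$ is a $w$-prime of $R$. Hence the weakly increasing sequence $P_0\subseteq\cdots\subseteq P_m$ takes at most $n+1$ distinct values. For each fixed such value $P$, localizing at $R\setminus P$ and reducing modulo $P$ sends the $Q_i$ with $Q_i\cap R=P$ to a strictly ascending chain of primes in the polynomial ring $k(P)[r]$ of Krull dimension $r$; so at most $r+1$ of the $Q_i$ contract to $P$. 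Combining these bounds, $m+1\leq(n+1)(r+1)$, which gives $m\leq r+(r+1)n$.

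I expect Step~1 to be the main obstacle: pushing the contraction characterization through the iterative definition $w[r]=(w[r-1])[X_r]$ is where the specifically $w$-operation structure enters. Once that characterization is in hand, Step~2 is an explicit construction and Step~3 is the standard polynomial-ring counting argument (distinct contractions bounded by $w$-dimension of $R$, fibers bounded by the dimension of $k(P)[r]$).
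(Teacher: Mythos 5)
The paper does not actually prove Proposition~\ref{finite}; it is quoted from \cite[Theorem 3.1]{S}, so your proposal has to be judged on its own merits. Your overall strategy --- a contraction characterization of the $w[r]$-primes followed by Seidenberg-style chain counting --- is the natural one, and Steps~1 and~2 are essentially sound: the base case of Step~1 is exactly the content of \cite[Remark 2.3]{S} (which this paper invokes elsewhere, in the proof of Theorem~\ref{main4}), namely that the $w[X]$-primes of $R[X]$ are the primes $Q$ with $Q\cap R=(0)$ or $Q\cap R$ a $w$-prime, and the induction through $w[r]=(w[r-1])[X_r]$ goes through as you describe. (Calling the base case ``immediate'' understates it: one must check that $\Theta_1^w$ is stable under passing to smaller primes and that the $w[X]$-primes of $R[X]$ are precisely the members of $\Theta_1^w$; both are true, but they are the substance of \cite[Remark 2.3]{S}, not a formality.)

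The genuine problem is an off-by-one in Step~3. The paper defines $*$-$\dim$ as the supremum of the \emph{number} of (necessarily nonzero) $*$-primes occurring in a chain, so from a chain $Q_0\subsetneq\cdots\subsetneq Q_m$ of $w[r]$-primes you must bound $m+1$, not $m$, by $r+(r+1)n$; your count ``at most $n+1$ distinct contractions, at most $r+1$ primes of the chain over each'' only yields $m+1\le(n+1)(r+1)=r+(r+1)n+1$, one too many. The repair is the classical one: the contractions $Q_i\cap R$ take at most $n$ distinct \emph{nonzero} values (these form a chain of $w$-primes of $R$) plus possibly the value $(0)$, and at most $r$ of the $Q_i$ can lie over $(0)$, because they correspond to a strict chain of \emph{nonzero} primes of $K[r]$ (with $K=qf(R)$), which has at most $r$ members --- the slot occupied by the zero ideal of $K[r]$ is unavailable since every $w[r]$-prime is nonzero by definition. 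This gives $m+1\le r+(r+1)n$ as required. Note also that a shorter route is available from tools the paper does state: by Proposition~\ref{loc}, $w[r]\text{-}\dim(R[r])=\sup\{\dim(R_M[r])\mid M\in w\text{-}\Max(R)\}$; since every prime contained in a $w$-prime is again a $w$-prime one has $\sup\{\dim(R_M)\mid M\in w\text{-}\Max(R)\}=n$; and the classical bounds $\dim(A)+r\le\dim(A[r])\le(r+1)\dim(A)+r$ applied to each $R_M$ finish the proof.
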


\begin{prop}(\cite[Lemma 4.4]{Sbull})\label{loc} Let $R$ be an integral domain
and $n$ be an integer. Then
$$
w[n]\text{-}\dim(R[n])=\sup\{\dim(R_M[n])|M\in w\text{-}\Max(R)\}.
$$
\end{prop}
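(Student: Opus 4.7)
The plan is to exploit the stable finite-character nature of $w[n]$---which follows by iterating the identity $\widetilde{w[X]}=w[X]$ recalled after Theorem 2.1 of \cite{S}---in order to reduce the $w[n]$-Krull dimension to a supremum of classical heights, and then match the $w[n]$-maximal ideals of $R[n]$ with the classical maximal ideals of $R_M[n]$ for $M\in w\text{-}\Max(R)$.

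First I would invoke the general principle that for any stable star operation $*$ of finite character on a domain $D$, every prime contained in a $*$-prime is again a $*$-prime, so for $Q\in *\text{-}\Max(D)$ the height of $Q$ in the poset of $*$-primes agrees with the ordinary height $\dim(D_Q)$. Applied to $D=R[n]$ with $*=w[n]$, this gives
$$w[n]\text{-}\dim(R[n])=\sup\{\dim(R[n]_Q):Q\in w[n]\text{-}\Max(R[n])\}.$$

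Next, I would characterize $w[n]$-primes by induction on $n$, unwinding the definition of $\Theta_1^w$ at each step, to establish that a prime $Q$ of $R[n]$ is a $w[n]$-prime if and only if $Q\cap R=(0)$ or $(Q\cap R)^w\subsetneq R$, equivalently $Q\cap R\subseteq M$ for some $M\in w\text{-}\Max(R)$. Granted this, a $w[n]$-maximal ideal $Q$ is precisely a prime of $R[n]$ maximal among those whose contraction to $R$ lies in some $M\in w\text{-}\Max(R)$. Setting $S:=R\setminus M$, the classical bijection between primes of $R[n]$ disjoint from $S$ and primes of $R_M[n]=S^{-1}R[n]$ then yields $R[n]_Q\cong R_M[n]_{QR_M[n]}$ with $QR_M[n]\in\Max(R_M[n])$; conversely, every $Q'\in\Max(R_M[n])$ contracts to such a $Q$. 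Taking suprema over all $M\in w\text{-}\Max(R)$ and all $Q'\in\Max(R_M[n])$, and using the standard identity $\dim(R_M[n])=\sup\{\dim(R_M[n]_{Q'}):Q'\in\Max(R_M[n])\}$, delivers the desired equality.

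The main obstacle is the inductive characterization of $w[n]$-primes: one must verify carefully that the condition $(Q\cap R)^w\subsetneq R$ propagates through each layer of the iterative definition $w[r]=(w[r-1])[X_r]$, since the definition of $w[r]$ refers to contractions to $R[r-1]$ rather than directly to $R$. Once the base case is unpacked using the explicit form of $\Theta_1^w$, each inductive step should reduce to the routine check that if $Q\cap R[r-1]$ lies in some $w[r-1]$-maximal ideal of $R[r-1]$, then (by the inductive hypothesis applied to that maximal ideal) $Q\cap R$ lies in some $w$-maximal ideal of $R$, and conversely.
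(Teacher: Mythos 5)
Your outline is correct, and since the paper gives no proof of this proposition (it is quoted from \cite[Lemma 4.4]{Sbull}), I can only say that your route is the natural one and, as far as the cited source goes, essentially the same: use stability and finite character of $w[n]$ to rewrite $w[n]\text{-}\dim(R[n])$ as $\sup\{\dim(R[n]_Q)\,|\,Q\in w[n]\text{-}\Max(R[n])\}$, identify the $w[n]$-primes of $R[n]$ as exactly the primes contracting in $R$ to $(0)$ or to a $w$-prime (this is the iterated form of \cite[Remark 2.3]{S}, and your inductive unwinding of $\Theta_1^w$ through $w[r]=(w[r-1])[X_r]$ is the right way to verify it), and then localize at $R\setminus M$. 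The one loose phrase is the claimed bijection between $w[n]$-maximal ideals and the maximal ideals of the rings $R_M[n]$: a maximal ideal of $R_M[n]$ contracts to a $w[n]$-prime of $R[n]$ that need not be $w[n]$-maximal (it may sit strictly under a $w[n]$-maximal ideal whose trace on $R$ lies in a different $w$-maximal ideal), but since every such contraction is still a $w[n]$-prime and hence lies under some $w[n]$-maximal ideal, the two one-sided inequalities between the suprema --- which is all your argument actually needs --- go through unchanged.
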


A valuation overring $V$ of $R$, is called a \emph{$w$-valuation
overring of $R$}, provided $F^w\subseteq FV$, for each $F\in f(R)$.
Following \cite{S}, the \emph{$w$-valuative dimension} of $R$ is
defined as:
$$
w\text{-}\dim_v(R):=\sup\{\dim(V)|V\text{ is }w\text{-valuation
overring of }R\}.
$$

\begin{prop}(\cite[Lemma 2.5]{Sbull})\label{v} For each domain $R$,
$$
w\text{-}\dim_v(R)=\sup\{\dim_v(R_P)|P\in w\text{-}\Max(R)\}.
$$
\end{prop}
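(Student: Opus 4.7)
The plan is to prove the equality by establishing both inequalities separately, using the key fact that the $w$-operation, being stable and of finite character, admits the description $A^{w}=\bigcap_{M\in w\text{-}\Max(R)} AR_{M}$ for every $A\in F(R)$.

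For the inequality $\sup\{\dim_{v}(R_{P})\mid P\in w\text{-}\Max(R)\}\leq w\text{-}\dim_{v}(R)$, I would fix $P\in w\text{-}\Max(R)$ and an arbitrary valuation overring $W$ of $R_{P}$ (so $R\subseteq R_{P}\subseteq W\subseteq qf(R)$). The claim is that $W$ is automatically a $w$-valuation overring of $R$. Indeed, for each $F\in f(R)$ we have $F^{w}\subseteq FR_{P}\subseteq FW$, where the first inclusion uses the displayed formula for $A^{w}$ applied at the $w$-maximal ideal $P$. Taking the supremum of $\dim(W)$ over all valuation overrings $W$ of $R_{P}$ gives $\dim_{v}(R_{P})\leq w\text{-}\dim_{v}(R)$, and then passing to the supremum over $P$ yields the desired inequality.

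For the reverse inequality, let $V$ be any $w$-valuation overring of $R$ and set $Q:=M_{V}\cap R$, where $M_{V}$ is the maximal ideal of $V$. The first task is to show that $Q$ is a prime $w$-ideal of $R$: for any $F\in f(R)$ with $F\subseteq Q$, the $w$-valuation condition gives $F^{w}\subseteq FV\subseteq M_{V}$, hence $F^{w}\subseteq M_{V}\cap R=Q$, and since $w$ is of finite character this forces $Q^{w}=Q$. Then $Q$ sits inside some $M\in w\text{-}\Max(R)$, and since $R\setminus M\subseteq R\setminus Q$ consists of units of $V$, the overring $V$ contains $R_{M}$. Thus $V$ is a valuation overring of $R_{M}$, giving $\dim(V)\leq\dim_{v}(R_{M})\leq\sup\{\dim_{v}(R_{P})\mid P\in w\text{-}\Max(R)\}$. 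Taking the supremum over all $w$-valuation overrings $V$ completes the proof.

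The only step requiring real care is the verification that $Q=M_{V}\cap R$ is a $w$-ideal; everything else is either definitional manipulation or an immediate consequence of the localization formula for the $w$-operation. The argument must handle the case $w\text{-}\dim_{v}(R)=\infty$ as well, but this is routine: if the left-hand side is infinite one exhibits $w$-valuation overrings of arbitrarily large dimension and transfers the bound to some $R_{M}$ via the construction above; if the right-hand side is infinite one uses valuation overrings of the relevant $R_{P}$ directly.
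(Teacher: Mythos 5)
The paper does not prove this proposition; it is quoted verbatim from \cite[Lemma 2.5]{Sbull}, so there is no internal proof to compare against. Your argument is correct and is the natural one: the inequality $\geq$ rests on the localization formula $F^{w}=\bigcap_{M\in w\text{-}\Max(R)}FR_{M}$ (valid since $w$ is stable and of finite character), and the inequality $\leq$ on the observation that the center $M_{V}\cap R$ of a $w$-valuation overring $V$ is a $w$-prime, hence lies under some $w$-maximal ideal $M$ with $R_{M}\subseteq V$; both steps, including the verification that $M_{V}\cap R$ is a $w$-ideal, are carried out correctly, and this is almost certainly the route taken in the cited source.
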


\begin{prop}(\cite[Theorem 4.2]{Sbull})\label{wdim} Let $R$ be an
integral domain, and $n$ be a positive integer. Then the following
statements are equivalent:
\begin{itemize}
\item[(1)] $w$-$\dim_v(R)=n$.

\item[(2)] $w[n]\text{-}\dim(R[n])=2n$.

\item[(3)] $w[r]\text{-}\dim(R[r])=r+n$ for all $r\geq n-1$.
\end{itemize}
\end{prop}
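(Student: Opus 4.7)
The plan is to derive the single identity
$$w[r]\text{-}\dim(R[r]) = r + w\text{-}\dim_v(R) \qquad (r \geq 1), \qquad (\star)$$
after which the three equivalences in the proposition become trivial rewrites. By Proposition \ref{loc} we have $w[r]\text{-}\dim(R[r]) = \sup_M \dim(R_M[r])$, and by Proposition \ref{v} we have $w\text{-}\dim_v(R) = \sup_M \dim_v(R_M)$, with $M$ ranging over $w\text{-}\Max(R)$; so $(\star)$ reduces to the local claim that for each quasi-local $T = R_M$ of finite valuative dimension $k$,
$$\dim(T[X_1, \ldots, X_r]) = r + k \qquad (r \geq 1). \qquad (\star_{\T{loc}})$$

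I would prove $(\star_{\T{loc}})$ by induction on $r$. The base case $r = 1$ is Arnold's theorem for quasi-local domains, $\dim(T[X]) = 1 + \dim_v(T)$, whose proof uses the quasi-local hypothesis to pull back a maximal-length chain of primes from $V[X]$ to $T[X]$, where $V$ is a valuation overring of $T$ with $\dim(V) = k$. For the inductive step, the hypothesis $\dim(T[r-1]) = (r-1) + k$ combined with Jaffard's identity $\dim_v(S[X]) = 1 + \dim_v(S)$ applied to $S = T[r-1]$ shows $\dim(S) = \dim_v(S) = (r-1) + k$, i.e.\ $S$ is a Jaffard domain. Seidenberg's lower bound $\dim(S[X]) \geq \dim(S) + 1$ and Arnold's upper bound $\dim(S[X]) \leq 1 + \dim_v(S)$ then pinch $\dim(T[r]) = \dim(S[X])$ at the common value $r + k$.

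Taking $\sup_M$ of $(\star_{\T{loc}})$ and invoking Propositions \ref{loc} and \ref{v} yields $(\star)$ (with both sides $+\infty$ together if $w\text{-}\dim_v(R) = \infty$). The three equivalences now follow: (2) is $(\star)$ at $r = n$ asserting the value $2n$, and (3) is the family of instances $r \geq \max(n-1, 1)$ asserting $r + n$, each of which rearranges to $w\text{-}\dim_v(R) = n$. The boundary case $n = 1$, $r = 0$ in (3) is verified directly: $w\text{-}\dim(R) \leq w\text{-}\dim_v(R) = 1$, and $R$ not being a field provides a nonzero $w$-prime, so $w\text{-}\dim(R) = 1$.

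The main obstacle is the base case of $(\star_{\T{loc}})$: Seidenberg's bound alone gives only $\dim(T[X]) \geq 1 + \dim(T)$, which falls short of $1 + \dim_v(T)$ whenever the local $T$ is not Jaffard. The valuation-theoretic refinement for quasi-local domains is the real input; once it is in hand, the induction on $r$ and the passage to suprema over $w$-maximal ideals are routine bookkeeping.
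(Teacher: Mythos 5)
There is a genuine gap, and it sits at the foundation of your plan: the identity $(\star)$, $w[r]\text{-}\dim(R[r]) = r + w\text{-}\dim_v(R)$ for \emph{all} $r\geq 1$, is false, and so is the base case $(\star_{\T{loc}})$ that you attribute to Arnold. Arnold's theorem (\cite[Theorem 6]{A}) gives $\dim(T[X_1,\dots,X_r]) = r + \dim_v(T)$ only for $r \geq \dim_v(T)-1$; there is no quasi-local refinement valid at $r=1$. Indeed, for a quasi-local one-dimensional $T$ Seidenberg's upper bound forces $\dim(T[X])\leq 3$, while $1+\dim_v(T)$ can be arbitrarily large. Concretely, let $V=K+M$ be a rank-one valuation domain and $T=F+M$ with $F\subset K$ a subfield with $\td(K/F)=4$: then $T$ is a DW-domain by \cite[Theorem 3.1(2)]{Mim1}, so $w\text{-}\dim_v(T)=\dim_v(T)=5$, yet $w[X]\text{-}\dim(T[X])\leq\dim(T[X])\leq 3$. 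The example at the end of Section 3 of this very paper ($R=D+M$ with $D$ Krull, $w\text{-}\dim(R)=2$, $w\text{-}\dim_v(R)=2+n$) is another counterexample for $n\geq 3$, since Proposition \ref{finite} caps $w[X]\text{-}\dim(R[X])$ at $5$. The hypothesis $r\geq n-1$ in statement (3) is therefore not an artifact to be absorbed into ``routine bookkeeping''; it is exactly Arnold's threshold, and respecting it is the substance of the proposition. (Your side remark that both sides of $(\star)$ are $+\infty$ together when $w\text{-}\dim_v(R)=\infty$ fails for the same reason: take $\td(K/F)=\infty$ above.)

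Parts of your outline are sound and salvageable: the reduction to the quasi-local case via Propositions \ref{loc} and \ref{v} is legitimate, and your inductive step (once $\dim(T[r-1])=(r-1)+k$ holds, $T[r-1]$ is Jaffard and the Seidenberg/Jaffard bounds pinch $\dim(T[r])$ at $r+k$) is correct; anchoring that induction at $r=k-1$ with Arnold's actual theorem repairs the implication $(1)\Rightarrow(3)\Rightarrow(2)$. But $(2)\Rightarrow(1)$ then still needs an argument you no longer have: when $w\text{-}\dim_v(R)>n$ you must show $w[n]\text{-}\dim(R[n])>2n$, which requires a \emph{lower} bound on $\dim(T[n])$ for quasi-local $T$ with $\dim_v(T)>n$ --- e.g.\ producing an $(n+1)$-dimensional ($w$-)valuation overring and lifting a special chain of primes --- and this valuation-theoretic input is precisely what the cited proof in \cite[Theorem 4.2]{Sbull} supplies. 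Note also that the present paper does not prove this proposition but quotes it from \cite{Sbull}, so there is no internal proof to compare against; judged on its own terms, your argument as written does not establish any of the three equivalences.
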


It is observed in \cite{S} that $w$-$\dim(R)\leq w$-$\dim_v(R)$.
We say that $R$ is a \emph{$w$-Jaffard domain}, if
$w\text{-}\dim(R)=w\text{-}\dim_v(R)<\infty$. It is proved in
\cite{S}, that $R$ is a $w$-Jaffard domain if and only if
$$w[n]\text{-}\dim(R[n])=w\text{-}\dim(R)+n,$$
for each positive integer $n$.

Recall that an integral domain is called a \emph{strong Mori domain}
if it satisfies the ascending chain condition on $w$-ideals (cf.
\cite{WM}). Also recall that an integral domain $R$ is called a
\emph{UM$t$-domain}, if every upper to zero in $R[X]$ is a maximal
$t$-ideal \cite[Section 3]{HZ}. It is shown in \cite[Corollary 4.6
and Theorem 4.14]{S} that a strong Mori domain or a UM$t$ domain of
finite $w$-dimension is a $w$-Jaffard domain. In particular every
Krull domain is a $w$-Jaffard domain (of $w$-dimension 1).

If $F\subseteq K$ are fields, then $\td(K/F)$ stands for the
\emph{transcendence degree} of $K$ over $F$. Let $T$ be an integral
domain, $M$ a maximal ideal of $T$, $k=T/M$ and $\varphi:T\to k$ the
canonical surjection. Let $D$ be a proper subring of $k$ and
$R=\varphi^{-1}(D)$ be the pullback of the following diagram:
\begin{displaymath}
\xymatrix{ R \ar[r] \ar[d] &
D \ar[d] \\
T \ar[r]^{\varphi} & k. }
\end{displaymath}
In Section 2 we prove that if $F:=qf(D)$ then the followings hold:
\begin{itemize}
\item[(1)] $w\text{-}\dim(R)=\max\{w\text{-}\dim(T),w\text{-}\dim(D)+\dim(T_M)\}$.

\item[(2)] $w\text{-}\dim_v(R)=\max\{w\text{-}\dim_v(T),w\text{-}\dim_v(D)+\dim_v(T_M)+\td(k/F)\}$.

\item[(3)] If $T$ is quasilocal, then $R$ is a $w$-Jaffard domain if and only if $D$ is a $w$-Jaffard
domain, $T$ is a Jaffard domain, and $k$ is algebraic over $F$.
\end{itemize}

Using these results, in Section 3 we give examples to show that the
two classes of $w$-Jaffard and Jaffard domains are incomparable, and
an example of a $w$-Jaffard domain which is not a strong Mori nor a
UM$t$ domain. Also we observed that a Mori domain need not be a
$w$-Jaffard domain. As another application in Section 4 we prove
that for any pair of positive integers $(n,m)$ with $n+1\leq m\leq
2n+1$, there is an integrally closed integral domain $R$ such that
$w$-$\dim(R)=n$ and $w[X]$-$\dim(R[X])=m$, which is similar to a
result of Seidenberg \cite{S2}.

For the convenience of the reader, the following displays a diagram
of implications summarizing the relations between the main classes
of integral domains involved in this work.

\begin{center}
\setlength{\unitlength}{6pt}
\begin{picture}(40,33)(0,0)
\put(11,15.6){\circle*{.4}} \put(11,25.6){\circle*{.4}}
\put(11,25.6){\vector(0,-1){10}} \put(16.6,31){Dedekind}
\put(17.7,21){Krull} \put(9,26.6){Pr\"{u}fer} \put(9,16.6){P$v$MD}
\put(16.5,1){$w$-Jaffard} \put(17.5,11){Jaffard}
\put(-10,20){Quasi-Pr\"{u}fer} \put(41,20){Noetherian}
\put(40,0){\circle*{.4}} \put(40,10){\vector(0,-1){10}}
\put(41,0){Mori} \put(-4,10){UM$t$} \put(41,10){Strong Mori}
\put(20,0){\circle*{.4}} \put(20,10){\circle*{.4}}
\put(0,10){\circle*{.4}} \put(0,20){\circle*{.4}}
\put(40,10){\circle*{.4}} \put(40,20){\circle*{.4}}
\put(20,20){\circle*{.4}} \put(20,30){\circle*{.4}}
\put(20,30){\vector(0,-1){10}} \put(40,20){\vector(0,-1){10}}
\put(0,20){\vector(0,-1){10}} \put(20,30){\vector(2,-1){20}}
\put(20,20){\vector(2,-1){20}} \put(0,10){\vector(2,-1){20}}
\put(0,20){\vector(2,-1){20}} \put(20,30){\vector(-2,-1){9}}
\put(11,25.5){\vector(-2,-1){11}} \put(20,20){\vector(-2,-1){9}}
\put(11,15.5){\vector(-2,-1){11}} \put(40,10){\vector(-2,-1){20}}
\put(40,20){\vector(-2,-1){20}}
\end{picture}\\
\vspace{.5cm} A ring-theoretic perspective for $w$-Jaffard property.
\end{center}

\section{Pullbacks}

It is shown in \cite[Theorem 4.14]{S} that, a UM$t$ domain of finite
$w$-dimension is a $w$-Jaffard domain. Now we give an example of a
$w$-Jaffard non UM$t$ domain. Recall that recently Houston and
Mimouni in \cite[Theorem 4.2]{HM} proved that, if $m, n$ are
integers with $1\leq m\leq n$, and $B\subseteq\{2,\cdots, n\}$ with
$|B|=n-m$, then there exists a local Noetherian domain $R$ such that
$\dim(R)=n$, $t$-$\dim(R)=m$, and for each $i\in B$, every prime
ideal of height $i$ is a non-$t$-prime. Now let $n=3$, $m=2$ and
$B=\{3\}$. Then there exists a local Noetherian domain $(R,\fm)$
such that $\dim(R)=3=\h(\fm)$, $t$-$\dim(R)=2$, and that $\fm$ is a
non-$t$-prime. Consequently we have $w$-$\dim(R)=2$. Since $R$ is
Noetherian thus it is strong Mori and hence is a $w$-Jaffard domain.
But $R$ is not a UM$t$ domain since $w$-$\dim(R)=2$ (cf.
\cite[Theorem 3.7]{HZ}). In Example \ref{nonSM} we will give a
$w$-Jaffard domain which is not a strong Mori nor a UM$t$ domain.

To avoid unnecessary repetition, let us fix the notation. Let $T$
be an integral domain, $M$ a maximal ideal of $T$, $k=T/M$ and
$\varphi:T\to k$ the canonical surjection. Let $D$ be a proper
subring of $k$ and $R=\varphi^{-1}(D)$ be the pullback of the
following diagram:
\begin{displaymath}
\xymatrix{ R \ar[r] \ar[d] &
D \ar[d] \\
T \ar[r]^{\varphi} & k. }
\end{displaymath}
We assume that $R\subsetneq T$, and we refer to this diagram as a
diagram of type $(\square)$ and if the quotient field of $D$ is
equal to $k$, we refer to the diagram as a diagram of type
$(\square^*)$. The case where $T=V$ is a valuation domain of the
form $K+M$, where $K$ is a field and $M$ is the maximal ideal of
$V$ is of crucial interest, known as classical ``$D+M$''
construction.

Recall that $(R:T)=M$ is a prime ideal of $R$ and therefore $M$
is a divisorial ideal (or a $v$-ideal) of $R$. Thus $M$ is a
$w$-prime ideal of $R$. Also recall that $R/M\simeq D$, and $R$
and $T$ have the same quotient field. Moreover, $T$ is quasilocal
if and only if every ideal of $R$ is comparable (under inclusion)
to $M$. For each prime ideal $P$ of $R$ with $P\nsupseteq M$,
there is a unique prime ideal $Q$ of $T$ with $Q\cap R=P$ and
such that $R_P=T_Q$. For more details on general pullbacks, we
refer the reader to \cite{Fon, GH, GH1}, and \cite{BG} for
classical $D+M$ constructions.

\begin{lem}\label{w} For a
diagram of type $(\square)$, suppose that $P$ is a prime ideal of
$D$ and $Q$ is a prime ideal of $R$ such that
$Q=\varphi^{-1}(P)$. Then $P$ is a $w$-prime (resp. $w$-maximal)
ideal of $D$ if and only if $Q$ is a $w$-prime (resp.
$w$-maximal) ideal of $R$
\end{lem}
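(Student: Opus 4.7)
Note that $M = \ker(\varphi|_R) \subseteq Q = \varphi^{-1}(P)$ and that $\varphi^{-1}\colon \Spec(D)\to \{\fq\in \Spec(R)\mid \fq\supseteq M\}$ is an inclusion-preserving bijection identifying $P$ with $Q$. My plan is to deduce the lemma from two ingredients: (F1) for a nonzero prime ideal $\mathfrak{p}$ of a domain $A$, $\mathfrak{p}$ is a $w$-prime iff $\mathfrak{p}\subseteq \mathfrak{m}$ for some $\mathfrak{m}\in w\text{-}\Max(A)=t\text{-}\Max(A)$; (F2) in a pullback of type $(\square)$, the $w$-maximal ideals of $R$ containing $M$ are exactly the sets $\varphi^{-1}(N')$ with $N'\in w\text{-}\Max(D)$.

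Given (F1) and (F2), the $w$-prime part is immediate: $P$ is a $w$-prime of $D$ iff $P\subseteq N'$ for some $N'\in w\text{-}\Max(D)$, iff (by (F2)) $Q=\varphi^{-1}(P)\subseteq \varphi^{-1}(N')\in w\text{-}\Max(R)$, iff (by (F1)) $Q$ is a $w$-prime of $R$; for the final equivalence, observe that any $w$-maximal ideal of $R$ containing $Q\supseteq M$ must itself contain $M$ and hence arise from a $w$-maximal of $D$ via (F2). The $w$-maximal part then follows formally from the $w$-prime part: any prime of $R$ strictly above $Q\supseteq M$ still contains $M$, so maximality among $w$-primes transports across the bijection $\varphi^{-1}$.

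The main technical point is (F2), which I would justify either by citing the corresponding result on $t$-ideals in pullbacks from \cite{Fon, GH, GH1}, or by a direct argument working from the definition $A^{w}=\{x\in qf(R)\mid xJ\subseteq A$ for some $J\in f(R)$ with $J^{v}=R\}$ and exploiting that $M=(R:T)$ is a divisorial (in fact $w$-) ideal of $R$. In the self-contained approach, the key sub-step is the equivalence $J^{v}_{R}=R \Leftrightarrow \varphi(J)^{v}_{D}=D$ for a finitely generated ideal $J\subseteq R$ lifting a given $\varphi(J)\in f(D)$. Since $M$ is divisorial, $J^{v}_R=R$ forces $J\nsubseteq M$; using $qf(R)=qf(T)$ together with the conductor identity $M=(R:T)$, one relates $(R:J)$ to $(D:\varphi(J))$ through the pullback and transports the equality $(R:J)=R$ to $(D:\varphi(J))=D$ and conversely. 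This lifting/projection through $\varphi$ is the step I expect to be most delicate, which is why invoking the pullback literature for (F2) is the cleanest route.
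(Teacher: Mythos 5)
Your argument is logically sound, but it takes a genuinely different route from the paper, and the two routes end up leaning on citations of comparable depth. The paper's proof is a two-line affair: it quotes the identity $Q^w=\varphi^{-1}(P^w)$ from \cite[Lemma 3.1]{Mim}, which gives the forward implication immediately, and gets the converse by observing that $\varphi^{-1}$ is injective on subsets of $D$ (each fiber of $\varphi|_R$ is nonempty), so $\varphi^{-1}(P^w)=\varphi^{-1}(P)$ forces $P^w=P$. You instead decompose the statement into (F1), which is harmless --- the backward direction is exactly the fact recorded in the introduction that a prime contained in a $w$-prime is a $w$-prime, and the forward direction is the finite-character property of $w$ --- and (F2), the identification of the $w$-maximal ideals of $R$ containing $M$ with $\varphi^{-1}(w\text{-}\Max(D))$. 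All of the real content of the lemma is concentrated in (F2), and you are right to flag it: your self-contained sketch via the equivalence $J^v_R=R\Leftrightarrow\varphi(J)^v_D=D$ runs into the genuine obstruction that $(D:\varphi(J))$ lives in $qf(D)\subseteq k=T/M$ while $(R:J)$ lives in $qf(R)=qf(T)$, and there is no natural map between these fields, so "transporting the equality through the pullback" is not a routine verification. Citing the pullback literature for (F2) (the description of $t$-maximal ideals of $R$ over $M$ in \cite{GH, GH1}) is legitimate and makes your proof correct, but at that point you are invoking a result at least as heavy as the single identity $Q^w=\varphi^{-1}(P^w)$ that the paper uses, and the paper's identity also handles arbitrary $w$-primes directly rather than forcing everything through the $w$-maximal spectrum. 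One small additional point either way: the case $P=(0)$ (so $Q=M$) falls outside your (F1) as stated and deserves a separate sentence, though $M$ being divisorial makes it immediate.
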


\begin{proof} By \cite[Lemma
3.1]{Mim} we have $Q^w=\varphi^{-1}(P^w)$. So that if $P$ is a
$w$-prime ideal of $D$ then $Q$ is $w$-prime ideal of $R$.
Conversely if $Q$ is a $w$-prime ideal of $R$, then we have
$\varphi^{-1}(P^w)=\varphi^{-1}(P)$. Let $a\in P^w$. Then
$\varphi^{-1}(a)\subseteq\varphi^{-1}(P^w)=\varphi^{-1}(P)$. So
that $a\in P$ since $\varphi^{-1}(a)\neq\emptyset$. Thus $P^w=P$.
The other assertion is clear.
\end{proof}

It is well-known that \cite[Proposition 2.1(5)]{Fon} for a diagram
of type $(\square)$, if $T$ is quasilocal, we have
$\dim(R)=\dim(D)+\dim(T)$. The following proposition gives a
satisfactory analogue of this equality.

\begin{prop}\label{wdim=dim} For a
diagram of type $(\square)$, assume that $T$ is quasilocal. Then
$$
w\text{-}\dim(R)=w\text{-}\dim(D)+\dim(T).
$$
\end{prop}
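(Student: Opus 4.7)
The strategy is to use $M = (R:T)$ as a pivot in the prime spectrum of $R$. Since $M$ is divisorial, it is already a $w$-prime of $R$ (as noted just before Lemma~\ref{w}), and since $T$ is quasilocal, every ideal of $R$ is comparable to $M$. Consequently, every chain of primes of $R$ decomposes into a portion lying below $M$ (where subideals of the $w$-prime $M$ are automatically $w$-primes) and a portion lying above $M$ (where $R/M \cong D$ lets me translate to $D$ via Lemma~\ref{w}). I will bound the two parts separately.

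For the lower bound $w\text{-}\dim(R) \geq \dim(T) + w\text{-}\dim(D)$, I would build a chain by concatenation. Take a chain $(0) = P_0 \subsetneq P_1 \subsetneq \cdots \subsetneq P_s$ of $w$-primes of $D$ of length $s = w\text{-}\dim(D)$; by Lemma~\ref{w}, each $\varphi^{-1}(P_i)$ is a $w$-prime of $R$, and $\varphi^{-1}(P_0) = M$. Extend below by a chain $Q_0 \subsetneq \cdots \subsetneq Q_t = M$ of primes of $R$ of length $t = \h_R(M)$. The classical pullback correspondence of primes, together with the quasilocality of $T$, gives $\h_R(M) = \h_T(M) = \dim(T)$, and since each $Q_i$ with $i < t$ is a prime subideal of the $w$-prime $M$, it is itself a $w$-prime (as recalled in the introduction). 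Concatenation gives a chain of $w$-primes of $R$ of length $\dim(T) + w\text{-}\dim(D)$.

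For the upper bound, let $Q_0 \subsetneq \cdots \subsetneq Q_n$ be any chain of $w$-primes of $R$. Using comparability with $M$ and the fact that $M$ is a $w$-prime, I insert $M$ into the chain whenever it is not already present; this only lengthens the chain and preserves the $w$-prime property, so I may assume $M = Q_a$ for some index $a$. The lower segment $Q_0 \subsetneq \cdots \subsetneq Q_a = M$ has length $a \leq \h_R(M) = \dim(T)$, while the upper segment $M = Q_a \subsetneq Q_{a+1} \subsetneq \cdots \subsetneq Q_n$ descends through the isomorphism $R/M \cong D$ to a chain of $w$-primes $(0) \subsetneq Q_{a+1}/M \subsetneq \cdots \subsetneq Q_n/M$ of $D$ (again by Lemma~\ref{w}) of length $n - a \leq w\text{-}\dim(D)$. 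Summing the bounds yields $n \leq \dim(T) + w\text{-}\dim(D)$.

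The main technical care will be the bookkeeping around the pivot $M$: verifying that insertion of $M$ into an existing chain lengthens it without spoiling $w$-primality, and ensuring that the two segments are counted so that $M$ is not double-counted at the break point. Beyond this, the proof reduces to the prime-ideal correspondence of pullbacks, Lemma~\ref{w}, and the two elementary $w$-theoretic facts recalled in the introduction, so no genuinely new input is required.
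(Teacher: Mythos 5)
Your proposal is correct and follows essentially the same route as the paper's proof: both split a chain of $w$-primes of $R$ at the pivot $M$, handling the portion above $M$ via Lemma~\ref{w} and $R/M\cong D$, and the portion below $M$ via the prime correspondence with the quasilocal $T$ together with the fact that prime subideals of the $w$-prime $M$ are $w$-primes. Your explicit step of inserting $M$ into an arbitrary chain of $w$-primes is a minor refinement of a point the paper leaves implicit (it writes the maximal chain as already passing through $M$), but it is not a different argument.
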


\begin{proof} Let $n:=w\text{-}\dim(R)$, $s:=w\text{-}\dim(D)$, and
$t:=\dim(T)$. Suppose that $P_1\subset\cdots\subset P_s$ be a chain
of $w$-prime ideals of $D$. Let $Q_i:=\varphi^{-1}(P_i)$ which is a
$w$-prime ideal of $R$ by Lemma \ref{w}. Thus $M\subset
Q_1\subset\cdots\subset Q_s$. Also consider a chain
$L_1\subset\cdots\subset L_t=M$ of prime ideals of $T$. Note that
each $L_j$ is a $w$-prime ideal of $R$. Now we have a chain
$L_1\subset\cdots\subset L_t=M\subset Q_1\subset\cdots\subset Q_s$
of distinct $w$-prime ideals of $R$. This means that $s+t\leq n$.
Conversely suppose that $L_1\subset\cdots\subset L_r=M\subset
Q_1\subset\cdots\subset Q_u$ be a chain of distinct $w$-prime ideals
of $R$ such that $r+u=n$. Thus $L_1\subset\cdots\subset L_r=M$ is a
chain of prime ideals of $T$ and hence $r\leq t$. On the other hand
by setting $P_i:=Q_i/M$, we have a chain $P_1\subset\cdots\subset
P_u$ of $w$-prime ideals of $D$ by Lemma \ref{w}, and hence $u\leq
s$. Therefore $n=r+u\leq t+s$ completing the proof.
\end{proof}

\begin{rem} For a
diagram of type $(\square)$, assume that $T$ is quasilocal and $D=F$
is a field. Then by \cite[Theorem 3.1(2)]{Mim1}, we have $R$ is a
DW-domain (that is the $d$- and $w$-operations are the same). Hence
$w\text{-}\dim(R)=\dim(R)$. So that the equality in Lemma
\ref{wdim=dim} would be $(\dim(R)=)w\text{-}\dim(R)=\dim(T)$.
\end{rem}

The following proposition is inspired by \cite[Proposition
2.3]{ABDFK}.

\begin{prop}\label{main} For a
diagram of type $(\square^*)$, assume that $T$ is quasilocal.
Then the followings hold:
\begin{itemize}
\item[(a)]
$w[r]\text{-}\dim(R[r])=w[r]\text{-}\dim(D[r])+\dim(T[r])-\dim(k[r])$
for each positive integer $r$,

\item[(b)] $w\text{-}\dim_v(R)=w\text{-}\dim_v(D)+\dim_v(T)$,

\item[(c)] $R$ is a $w$-Jaffard domain $\Leftrightarrow$ $D$ is a $w$-Jaffard
domain and $T$ is a Jaffard domain.
\end{itemize}
\end{prop}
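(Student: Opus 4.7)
The plan is to reduce (a) and (b) to the corresponding classical pullback formulas in \cite{ABDFK} via localization at $w$-maximal ideals, using Propositions~\ref{loc} and~\ref{v} together with Lemma~\ref{w}, and then to deduce (c) by comparing the resulting formula in (b) with the expression for $w\text{-}\dim(R)$ already established in Proposition~\ref{wdim=dim}.

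The first step, common to (a) and (b), is to identify $w\text{-}\Max(R)$. Since $T$ is quasilocal, every ideal of $R$ is comparable to $M$, and $M$ itself is a $w$-prime of $R$; moreover $D$ is not a field because $qf(D)=k\supsetneq D$, so $(0)$ is not $w$-maximal in $D$ and, correspondingly, $M$ is not $w$-maximal in $R$. A prime of $R$ strictly below $M$ fails to be $w$-maximal because $M$ is a larger $w$-prime, so by Lemma~\ref{w} the $w$-maximal ideals of $R$ are exactly the ideals $N=\varphi^{-1}(\mathfrak{p})$ with $\mathfrak{p}\in w\text{-}\Max(D)$. Moreover, for such an $N$ the multiplicatively closed set $R\setminus N=\varphi^{-1}(D\setminus\mathfrak{p})$ maps into $k\setminus\{0\}\subseteq T\setminus M$, and since $T$ is quasilocal these elements are units in $T$. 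Consequently inverting $R\setminus N$ does not affect $T$, and one checks that $R_N=\varphi^{-1}(D_\mathfrak{p})$, again a pullback of type $(\square^*)$ with the same quasilocal $T$.

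For (a), Proposition~\ref{loc} reduces the problem to computing $\dim(R_N[r])$ for each $N$ above. The classical Krull-dimension formula \cite[Proposition 2.3]{ABDFK} for a $(\square^*)$ pullback over a quasilocal domain then gives
$$\dim(R_N[r])=\dim(T[r])+\dim(D_\mathfrak{p}[r])-\dim(k[r]).$$
Taking the supremum over $\mathfrak{p}\in w\text{-}\Max(D)$ and applying Proposition~\ref{loc} to $D$ produces the announced equality. Part (b) is entirely analogous, using Proposition~\ref{v} in place of Proposition~\ref{loc} together with the valuative version $\dim_v(R_N)=\dim_v(T)+\dim_v(D_\mathfrak{p})$ from \cite[Proposition 2.3]{ABDFK}.

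Finally, for (c), Proposition~\ref{wdim=dim} yields $w\text{-}\dim(R)=w\text{-}\dim(D)+\dim(T)$, while (b) gives $w\text{-}\dim_v(R)=w\text{-}\dim_v(D)+\dim_v(T)$. Because $w\text{-}\dim(D)\leq w\text{-}\dim_v(D)$ and $\dim(T)\leq\dim_v(T)$ always hold, these two sums coincide and are simultaneously finite if and only if both componentwise equalities hold with finite summands, i.e.\ iff $D$ is $w$-Jaffard and $T$ is Jaffard. The main obstacle I anticipate is the identification of $R_N$ as the localized pullback $\varphi^{-1}(D_\mathfrak{p})$ over the unchanged quasilocal $T$, together with the verification that the hypotheses of \cite[Proposition 2.3]{ABDFK} transfer to this $R_N$; once these are granted, the remainder is an assembly of the results already recorded in the excerpt.
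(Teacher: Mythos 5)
Your argument is correct, and for parts (a) and (c) it is essentially the paper's own proof: the paper likewise reduces to the localizations of $R$ at its $w$-maximal ideals (which, as you note, all properly contain $M$ and correspond via Lemma \ref{w} to $w$-$\Max(D)$), applies \cite[Proposition 2.3]{ABDFK} to the localized pullback $R_N=\varphi^{-1}(D_{\mathfrak{p}})$, and assembles the result with Proposition \ref{loc}; the only organizational difference is that the paper singles out the $w$-maximal ideal realizing $w[r]$-$\dim(R[r])$ and shows that the corresponding prime of $D$ realizes $w[r]$-$\dim(D[r])$, while you compute all the local dimensions and take the supremum. Part (b) is where you genuinely diverge. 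You run the same localization template again, quoting the valuative pullback formula $\dim_v(R_N)=\dim_v(D_{\mathfrak{p}})+\dim_v(T)$ together with Proposition \ref{v}; note that this formula is \cite[Theorem 2.6]{ABDFK} (as the paper itself cites it just before Theorem \ref{main1}), not Proposition 2.3. The paper instead first shows that $w$-$\dim_v(R)<\infty$ forces $\dim_v(T)<\infty$ (via $w$-valuation overrings and \cite[Theorem 3.9]{FL}) and $w$-$\dim_v(D)<\infty$ (via the localized pullback at $Q=\varphi^{-1}(P)$), and then converts part (a) into (b) by taking $r$ large and invoking Proposition \ref{wdim} together with Arnold's theorem $\dim(T[r])=\dim_v(T)+r$. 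Your route is shorter and avoids the polynomial-ring detour, at the price of leaning on the full valuative statement of \cite[Theorem 2.6]{ABDFK} applied to each $R_N$ and of silently handling the infinite terms in the supremum, which the paper treats explicitly; the paper's route needs only the Krull-dimensional formula from \cite{ABDFK} plus machinery already developed for the $w$-operation. Both are sound, and your deduction of (c) from (b) and Proposition \ref{wdim=dim} coincides with the paper's.
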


\begin{proof} (a) By Proposition \ref{wdim=dim} we have
$w$-$\dim(R)<\infty$ if and only if $w$-$\dim(D)<\infty$ and
$\dim(T)<\infty$. Hence $w[r]\text{-}\dim(R[r])<\infty$ if and only
if $w[r]\text{-}\dim(D[r])$ and $\dim(T[r])$ are finite numbers by
Proposition \ref{finite}. Thus we can assume that each domain is
finite ($w$-)dimensional.

Using \cite[Lemma 4.4 and Corollary 4.5]{Sbull}, there is a
$w$-maximal ideal $\fq$ of $R$ and a $w[r]$-maximal ideal $Q$ of
$R[r]$ such that $\fq=\varphi^{-1}(Q)$, and
$w[r]\text{-}\dim(R[r])=\h(Q)=r+\h(\fq[r])=\dim(R_\fq[r])$. Note
that since $M$ is a $w$-prime ideal of $R$ we have $M\subseteq\fq$.
Thus by Lemma \ref{w} we have $P:=\fq/M$ is a $w$-maximal ideal of
$D$. Next we claim that $\sup\{\dim(D_L[r])|L\in
w\text{-}\Max(D)\}=\dim(D_{P}[r])$, then Proposition \ref{loc} will
implies that $w[r]\text{-}\dim(D[r])=\dim(D_{P}[r])$. Let $L\in
w\text{-}\Max(D)$ and set $\fq_0:=\varphi^{-1}(L)$. We have the
following diagrams:
\begin{displaymath}
\xymatrix{ R_{\fq_0} \ar[r] \ar[d] & D_L \ar[d] & R_{\fq} \ar[r] \ar[d] & D_P \ar[d]\\
T \ar[r]^{\varphi} & k, & T \ar[r]^{\varphi} & k. }
\end{displaymath}
Therefore by \cite[Proposition 2.3]{ABDFK} we have
$\dim(D_L[r])+\dim(T[r])-r=\dim(R_{\fq_0}[r])\leq\dim(R_{\fq}[r])=\dim(D_P[r])+\dim(T[r])-r$,
where the inequality holds by Proposition \ref{loc}. Thus
$\dim(D_L[r])\leq\dim(D_P[r])$ for each $L\in w\text{-}\Max(D)$, and
hence $\sup\{\dim(D_L[r])|L\in w\text{-}\Max(D)\}=\dim(D_{P}[r])$.
Therefore we have
$$
w[r]\text{-}\dim(R[r])=w[r]\text{-}\dim(D[r])+\dim(T[r])-\dim(k[r]).
$$

(b) First suppose that $w\text{-}\dim_v(R)<\infty$. Then
$w\text{-}\dim(D)+\dim(T)=w\text{-}\dim(R)<\infty$, and so both
$w\text{-}\dim(D)$ and $\dim(T)$ are finite. In addition we claim
that $\dim_v(T)<\infty$. To this end let $(V,N)$ be a valuation
overring of $T$ and set $P:=N\cap T$. So that $P\subseteq M$ and
thus $P$ is a prime ideal of $R$. Hence $P$ is in fact a $w$-prime
ideal of $R$. Since $R_P\subseteq T_{R\backslash P}\subseteq
T_P\subseteq V$ we obtain that $V$ is a $w$-valuation overring of
$R$ by \cite[Theorem 3.9]{FL}, and consequently $\dim(V)\leq
w\text{-}\dim_v(R)$. This means that $\dim_v(T)\leq
w\text{-}\dim_v(R)<\infty$. Next we observe that
$w\text{-}\dim_v(D)<\infty$. Using Proposition \ref{v}, there exists
a $w$-prime ideal $P$ of $D$ such that
$w\text{-}\dim_v(D)=\dim_v(D_P)$. Let $Q:=\varphi^{-1}(P)$, which is
a $w$-prime ideal of $R$. Note that we have $M\subseteq Q$ and thus
$(R\backslash Q)\cap M=\emptyset$, and $\varphi(R\backslash
Q)=D\backslash P$. Therefore by \cite[Proposition 1.9]{Fon} we have
the following pullback diagram:
\begin{displaymath}
\xymatrix{ R_Q \ar[r] \ar[d] &
D_P \ar[d] \\
T \ar[r]^{\varphi} & k. }
\end{displaymath}
If $B$ is an $n$-dimensional overring of $D_P$, then
$A:=\varphi^{-1}(B)$ is an overring of $R_Q$, and \cite[Proposition
2.1(5)]{Fon} yields that $n+\dim(T)=\dim(A)$. Thus
$n+\dim(T)\leq\dim_v(R_Q)$. This means that
$w\text{-}\dim_v(D)=\dim_v(D_P)\leq \dim_v(R_Q)\leq
w\text{-}\dim_v(R)<\infty$, where the second inequality holds by
Proposition \ref{v}.

Let $r$ be a positive integer such that
$r\geq\max\{w\text{-}\dim_v(R),
w\text{-}\dim_v(D),\dim_v(T)\}-1$. Then by Proposition \ref{wdim}
and \cite[Theorem 6]{A} we have
\begin{align*}
w[r]\text{-}\dim(R[r])= & w\text{-}\dim_v(R)+r, \\[1ex]
w[r]\text{-}\dim(D[r])= & w\text{-}\dim_v(D)+r, \\[1ex]
\dim(T[r])= & \dim_v(T)+r.
\end{align*}
Then by (a),
$w\text{-}\dim_v(R)+r=(w\text{-}\dim_v(D)+r)+(\dim_v(T)+r)-r$,
yielding (b) in case $w\text{-}\dim_v(R)<\infty$.

To complete the proof of (b) we show that
$w\text{-}\dim_v(R)<\infty$ whenever $w\text{-}\dim_v(D)$ and
$\dim_v(T)$ are both finite. Let $r$ be a positive integer such
that $$r\geq\max\{w\text{-}\dim_v(D), \dim_v(T)\}-1.$$ Then by
(a), Proposition \ref{wdim}, and \cite[Theorem 6]{A} we have
$w[r]\text{-}\dim(R[r])=w[r]\text{-}\dim(D[r])+\dim(T[r])-r=(w\text{-}\dim_v(D)+r)+
(\dim_v(T)+r)-r=w\text{-}\dim_v(D)+\dim_v(T)+r$. Hence
$w\text{-}\dim_v(R)<\infty$ by another appeal to Proposition
\ref{wdim}.

(c) Since $w\text{-}\dim(R)=w\text{-}\dim(D)+\dim(T)$ and
$w$-$\dim(B)\leq w$-$\dim_v(B)$ and $\dim(B)\leq \dim_v(B)$ for a
domain $B$, (c) follows directly from (b).
\end{proof}

Recall from \cite{BSH} the notion of CPI (complete pre-image)
extension of a domain $R$ with respect to a prime ideal $P$ of $R$;
this is denoted $R(P)$ and is defined by the following pullback
diagram:
\begin{displaymath}
\xymatrix{ R(P) \ar[r] \ar[d] &
R/P \ar[d] \\
R_P \ar[r]^{\varphi} & R_P/PR_P. }
\end{displaymath}
Here $\varphi$ is the canonical homomorphism.

\begin{cor} Let $R$ be an integral domain, and let $P$ be a prime of $R$.
Then the CPI-extension $R(P)$ is a $w$-Jaffard domain
$\Leftrightarrow$ $R/P$ is a $w$-Jaffard domain and $R_P$ is a
Jaffard domain.
\end{cor}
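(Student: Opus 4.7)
The plan is to identify the CPI-extension pullback as an instance of a diagram of type $(\square^*)$ with quasilocal top ring, and then apply Proposition \ref{main}(c) directly. In the defining square for $R(P)$, the role of $T$ is played by $R_P$, which is quasilocal with unique maximal ideal $M = PR_P$ and residue field $k = R_P/PR_P$, while the role of $D$ is played by $R/P$.

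The only thing to verify is that $qf(D) = k$. This is clear since $R_P/PR_P$ is obtained from $R/P$ by inverting every nonzero element, so $R_P/PR_P = qf(R/P)$. Hence the square is of type $(\square^*)$, the top ring $T = R_P$ is quasilocal, and Proposition \ref{main}(c) yields immediately that $R(P)$ is a $w$-Jaffard domain if and only if $R/P$ is a $w$-Jaffard domain and $R_P$ is a Jaffard domain.

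There is essentially no obstacle; the main content of the corollary is the translation of the CPI construction into the pullback language of Proposition \ref{main}. One mild caveat to keep track of is the standing assumption $R \subsetneq T$ of Section 2, which for the CPI diagram becomes $R(P) \subsetneq R_P$, equivalent to $R/P$ not being a field, i.e., $P$ not being a maximal ideal of $R$. When $P$ is maximal one has $R(P) = R_P$, and the stated equivalence would then have to be read as the (trivial) boundary observation that $R_P$ is $w$-Jaffard precisely when $R_P$ is Jaffard in that degenerate situation; but as the corollary is really a direct corollary of Proposition \ref{main}(c), no further work is needed in the generic case.
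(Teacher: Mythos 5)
Your proposal is correct and matches the paper's (implicit) argument exactly: the paper states this corollary without proof, precisely because the CPI-extension diagram is a diagram of type $(\square^*)$ with quasilocal $T=R_P$ and $D=R/P$ having quotient field $k=R_P/PR_P$, so Proposition \ref{main}(c) applies directly. Your remark about the degenerate case where $P$ is maximal (so that $R(P)=R_P$ and the standing hypothesis $R\subsetneq T$ fails) is a sensible caveat the paper leaves unstated.
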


In \cite[Theorem 2.6]{ABDFK}, Anderson, Bouvier, Dobbs, Fontana
and Kabbaj proved that for a diagram of type $(\square)$ such
that $T$ is quasilocal and $F:=qf(D)$ then
$\dim_v(R)=\dim_v(D)+\dim_v(T)+\td(k/F)$ and hence $R$ is a
Jaffard domain if and only if $D$ and $T$ are Jaffard domains and
$k$ is algebraic over $F$. Now we have:

\begin{thm}\label{main1} For a
diagram of type $(\square)$, assume that $T$ is quasilocal and let
$F=qf(D)$. Then
\begin{itemize}
\item[(a)]
$w\text{-}\dim_v(R)=w\text{-}\dim_v(D)+\dim_v(T)+\td(k/F)$,

\item[(b)] $R$ is a $w$-Jaffard domain $\Leftrightarrow$ $D$ is a $w$-Jaffard
domain, $T$ is a Jaffard domain, and $k$ is algebraic over $F$.
\end{itemize}
\end{thm}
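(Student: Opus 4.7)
The plan is to interpolate a canonical intermediate ring between $R$ and $T$ so that the general $(\square)$-pullback factors as a $(\square^*)$-pullback stacked on top of a pullback whose ``$D$'' is a field, and then to invoke Proposition~\ref{main}(b) together with the classical valuative-dimension formula \cite[Theorem 2.6]{ABDFK}. Set $F:=qf(D)$ and $S:=\varphi^{-1}(F)$. Since $T$ is quasilocal and $S/M\simeq F$ is a field, every prime of $S$ is contained in $M$, so $S$ is quasilocal with maximal ideal $M$.

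For (a), read $R=\varphi^{-1}(D)$ inside $S$ via the restricted surjection $S\to F$; this is a diagram of type $(\square^*)$ (because $qf(D)=F$) in which $S$ plays the role of the quasilocal ``$T$'' of Proposition~\ref{main}, so part~(b) of that proposition gives
\begin{equation*}
w\text{-}\dim_v(R)=w\text{-}\dim_v(D)+\dim_v(S).
\end{equation*}
On the other hand, $S=\varphi^{-1}(F)$ on top of $T\to k$ is a $(\square)$-pullback in which the lower-right ring is the field $F$, and $T$ is quasilocal, so \cite[Theorem 2.6]{ABDFK} applies and yields
\begin{equation*}
\dim_v(S)=\dim_v(F)+\dim_v(T)+\td(k/qf(F))=\dim_v(T)+\td(k/F).
\end{equation*}
Substituting the second equality into the first proves (a). The degenerate cases $D=F$ (so $R=S$ and $w\text{-}\dim_v(D)=0$) and $F=k$ (so $S=T$, $\td(k/F)=0$, and the original diagram is already of type $(\square^*)$) reduce to a single application of one of the two formulas, which produces the same identity; in the $D=F$ case one also uses the remark after Proposition~\ref{wdim=dim} to note that $R$ is a DW-domain and hence $w\text{-}\dim_v(R)=\dim_v(R)$ before quoting \cite[Theorem 2.6]{ABDFK} on the original diagram.

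For (b), I would combine (a) with Proposition~\ref{wdim=dim}, which gives $w\text{-}\dim(R)=w\text{-}\dim(D)+\dim(T)$, so that the $w$-Jaffard condition $w\text{-}\dim(R)=w\text{-}\dim_v(R)<\infty$ reads
\begin{equation*}
w\text{-}\dim(D)+\dim(T)=w\text{-}\dim_v(D)+\dim_v(T)+\td(k/F)<\infty.
\end{equation*}
Because the three term-by-term inequalities $w\text{-}\dim(D)\leq w\text{-}\dim_v(D)$, $\dim(T)\leq\dim_v(T)$, and $0\leq\td(k/F)$ all point in the same direction, equality of the two sides (with both finite) forces each one to be an equality, yielding exactly the three conditions of (b); the converse is immediate from the same two formulas. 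The only nontrivial step — and hence the main obstacle — is the bookkeeping in the interpolation: verifying that $S$ is quasilocal, that the two factor diagrams have the asserted types, and that the transcendence-degree contribution $\td(k/F)$ is correctly localized to the lower square. Once that is in place, (a) and (b) follow by substitution and by comparing inequalities.
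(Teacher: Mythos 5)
Your proposal is correct and takes essentially the same route as the paper: the paper likewise interpolates $S:=\varphi^{-1}(F)$, applies Proposition~\ref{main}(b) to the upper $(\square^*)$-square and the Anderson--Bouvier--Dobbs--Fontana--Kabbaj valuative-dimension formula to the lower square (the paper cites \cite[Proposition 2.5]{ABDFK} where you cite \cite[Theorem 2.6]{ABDFK}, but the conclusion $\dim_v(S)=\dim_v(T)+\td(k/F)$ is the same), and then derives (b) from (a) together with Proposition~\ref{wdim=dim}. Your additional checks (quasilocality of $S$, the degenerate cases $D=F$ and $F=k$, and the term-by-term inequality argument for (b)) only make explicit what the paper leaves implicit.
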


\begin{proof} (a) Split the pullback diagram $(\square)$ into two parts:
\begin{displaymath}
\xymatrix{ R \ar[r] \ar[d] &
D \ar[d] \\
S:=\varphi^{-1}(D) \ar[r] \ar[d] &
F \ar[d] \\
T \ar[r]^{\varphi} & k. }
\end{displaymath}
Now the upper diagram is of type $(\square^*)$, and $S$ is
quasilocal. Thus by Proposition \ref{main}(b) we have
$w\text{-}\dim_v(R)=w\text{-}\dim_v(D)+\dim_v(S)$. Also from the
lower diagram, \cite[Proposition 2.5]{ABDFK} yields that
$\dim_v(S)=\dim_v(T)+\td(k/F)$. We thus have the desired equality.

(b) Since $w\text{-}\dim(R)=w\text{-}\dim(D)+\dim(T)$, (b) follows
directly from (a).
\end{proof}

In Example \ref{K} we will give an example of a $w$-Jaffard
domain which is not Jaffard. Using Theorem \ref{main1} together
with \cite[Theorem 2.6]{ABDFK} we have the following corollary.

\begin{cor} For a
diagram of type $(\square)$, assume that $T$ is quasilocal and let
$F=qf(D)$. Then $R$ is a $w$-Jaffard domain which is not Jaffard
$\Leftrightarrow$ $D$ is a $w$-Jaffard domain which is not Jaffard,
$T$ is a Jaffard domain and $k$ is algebraic over $F$.
\end{cor}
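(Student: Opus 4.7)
The plan is a direct logical combination of Theorem \ref{main1}(b) with the classical pullback characterization of the ordinary Jaffard property, namely \cite[Theorem 2.6]{ABDFK}, which states that under the same running assumptions ($T$ quasilocal, $F=qf(D)$) the domain $R$ is Jaffard if and only if $D$ and $T$ are Jaffard and $k$ is algebraic over $F$. No further pullback computation will be required: the work is purely in matching the two sets of conditions.

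For the forward direction, I would assume that $R$ is $w$-Jaffard but not Jaffard, and apply Theorem \ref{main1}(b) to obtain three conditions, namely that $D$ is $w$-Jaffard, $T$ is Jaffard, and $k$ is algebraic over $F$. If $D$ were also Jaffard, then \cite[Theorem 2.6]{ABDFK}, together with the two trailing conditions just obtained, would force $R$ itself to be Jaffard, contradicting the hypothesis. Hence $D$ must be $w$-Jaffard and not Jaffard, which is the conjunction on the right-hand side.

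For the reverse direction, I would assume that $D$ is $w$-Jaffard but not Jaffard, that $T$ is Jaffard, and that $k/F$ is algebraic. Theorem \ref{main1}(b) immediately yields that $R$ is $w$-Jaffard. If in addition $R$ were Jaffard, then \cite[Theorem 2.6]{ABDFK} would force $D$ to be Jaffard, contradicting our assumption; hence $R$ is $w$-Jaffard but not Jaffard.

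I do not anticipate any genuine obstacle here: both Theorem \ref{main1}(b) and \cite[Theorem 2.6]{ABDFK} share the trailing clauses ``$T$ is Jaffard'' and ``$k/F$ is algebraic'', so once these are fixed the equivalence reduces to ``$R$ is $w$-Jaffard (resp.\ Jaffard) $\Leftrightarrow$ $D$ is $w$-Jaffard (resp.\ Jaffard)'', and the desired statement follows by taking the difference. The one point that requires mild attention is ensuring that, in the forward direction, the trailing conditions delivered by Theorem \ref{main1}(b) are precisely those needed to activate \cite[Theorem 2.6]{ABDFK}; as observed above, the two theorems were formulated with identical auxiliary hypotheses, so this match is automatic.
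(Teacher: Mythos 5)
Your argument is correct and is exactly the route the paper takes: the corollary is stated there as an immediate consequence of Theorem \ref{main1}(b) combined with \cite[Theorem 2.6]{ABDFK}, with the two directions handled by the same matching of the shared clauses ``$T$ is Jaffard'' and ``$k$ algebraic over $F$'' that you describe. No gaps.
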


We pause here to give some concrete applications of the above
theory to the classical $D+M$ constructions.

\begin{cor} Let $V$ be a nontrivial valuation domain of the form
$V=K+M$, where $K$ is a field and $M$ is the maximal ideal of $V$.
Let $R=D+M$, where $D$ is a proper subring of $K$ and let $F=qf(D)$.
Then
\begin{itemize}
\item[(a)] $w\text{-}\dim_v(R)=w\text{-}\dim_v(D)+\dim(V)+\td(K/F)$,

\item[(b)] $R$ is a $w$-Jaffard domain $\Leftrightarrow$ $D$ is a $w$-Jaffard
domain, $V$ is finite-dimensional, and $K$ is algebraic over $F$.
\end{itemize}
\end{cor}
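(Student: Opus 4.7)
The plan is to specialize Theorem \ref{main1} to the classical $D+M$ setting. Take the pullback diagram of type $(\square)$ with $T := V$, maximal ideal $M$, residue field $k := T/M = K$, and subring $D \subsetneq K$ with $F = qf(D)$. Since $V$ is a nontrivial valuation domain, it is quasilocal, so the hypothesis of Theorem \ref{main1} is satisfied and we are entitled to apply both parts of that theorem.

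For (a), Theorem \ref{main1}(a) gives
$$w\text{-}\dim_v(R) = w\text{-}\dim_v(D) + \dim_v(V) + \td(K/F).$$
The remaining ingredient is the classical identity $\dim_v(V) = \dim(V)$ for a valuation domain $V$. This is immediate because every valuation overring of $V$ is a localization of $V$ at a prime and therefore has rank at most $\dim(V)$, while $V$ itself is a valuation overring realizing $\dim(V)$. Substituting yields the stated formula.

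For (b), Theorem \ref{main1}(b) says $R$ is a $w$-Jaffard domain if and only if $D$ is a $w$-Jaffard domain, $V$ is a Jaffard domain, and $K$ is algebraic over $F$. By definition $V$ is Jaffard precisely when $\dim(V) = \dim_v(V) < \infty$; since $\dim(V) = \dim_v(V)$ always holds for a valuation domain, this reduces to $\dim(V) < \infty$, i.e., $V$ is finite-dimensional. This yields the equivalence claimed in (b).

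There is no real obstacle here; the corollary is a direct specialization of Theorem \ref{main1}, and the only thing to verify is the elementary fact that for a valuation domain Jaffardness is equivalent to finite-dimensionality.
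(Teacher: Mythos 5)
Your proposal is correct and matches the paper's (implicit) argument: the corollary is stated there as an immediate specialization of Theorem \ref{main1} with $T=V$, and the only extra observation needed is exactly the one you supply, namely that a valuation domain satisfies $\dim_v(V)=\dim(V)$, so Jaffardness of $V$ reduces to finite-dimensionality.
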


A ``global'' type of $D+M$ constructions arise from $T=K[[X]]$,
the formal power series ring over a field $K$, by considering
$M=XT$ and a subring $D$ of $K$.

\begin{cor} Let $K$ be a field, $D$ a subring of $K$ with quotient field
$F$, $R=D+XK[[X]]$. Then
\begin{itemize}
\item[(a)] $w\text{-}\dim(R)=w\text{-}\dim(D)+1$,

\item[(b)] $w\text{-}\dim_v(R)=w\text{-}\dim_v(D)+\td(K/F)+1$.

\item[(c)] $R$ is a $w$-Jaffard domain $\Leftrightarrow$ $D$ is a $w$-Jaffard
domain and $K$ is algebraic over $F$.
\end{itemize}
\end{cor}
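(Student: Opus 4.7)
The plan is to recognize the construction $R = D + XK[[X]]$ as a pullback of type $(\square)$ where $T = K[[X]]$, $M = XK[[X]]$, and $\varphi : T \to T/M \cong K$ is the natural projection, with $D$ a proper subring of $K$. Since $K[[X]]$ is a discrete valuation ring, $T$ is quasilocal, and all the earlier pullback machinery applies directly. Therefore, almost everything reduces to plugging in the values $\dim(T) = \dim_v(T) = 1$ and the fact that $T$ is Noetherian, hence Jaffard.

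For part (a), I would invoke Proposition \ref{wdim=dim}, which gives
\[
w\text{-}\dim(R) = w\text{-}\dim(D) + \dim(T) = w\text{-}\dim(D) + 1,
\]
since $T = K[[X]]$ is a DVR with $\dim(T) = 1$. For part (b), Theorem \ref{main1}(a) with $k = T/M = K$ and $F = qf(D)$ yields
\[
w\text{-}\dim_v(R) = w\text{-}\dim_v(D) + \dim_v(T) + \td(K/F) = w\text{-}\dim_v(D) + \td(K/F) + 1,
\]
again using $\dim_v(T) = \dim(T) = 1$ since DVRs are Jaffard (indeed Noetherian).

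For part (c), Theorem \ref{main1}(b) asserts that $R$ is a $w$-Jaffard domain if and only if $D$ is a $w$-Jaffard domain, $T$ is a Jaffard domain, and $K$ is algebraic over $F$. But $T = K[[X]]$ is Noetherian and therefore automatically Jaffard (by the classical result of Krull that finite-dimensional Noetherian domains satisfy $\dim = \dim_v$), so the Jaffard hypothesis on $T$ is vacuous. This collapses the criterion to: $D$ is $w$-Jaffard and $K/F$ is algebraic, as claimed.

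There is essentially no obstacle here, since each part follows by direct specialization of Proposition \ref{wdim=dim} and Theorem \ref{main1} to the case $T = K[[X]]$; the only observation one must make is that $K[[X]]$ is a one-dimensional Noetherian (in particular Jaffard) domain, which kills $\dim(T)$, $\dim_v(T)$, and the Jaffard condition on $T$ in one stroke.
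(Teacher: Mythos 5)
Your proposal is correct and matches the paper's intended argument: the corollary is stated immediately after Theorem \ref{main1} precisely as a specialization to the quasilocal pullback $T=K[[X]]$, $M=XK[[X]]$, $k=K$, using $\dim(T)=\dim_v(T)=1$ and the fact that a DVR is Jaffard. Nothing further is needed.
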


We next proceed to generalize the previous ``quasilocal'' theory. In
this direction we prove the ``global'' analogue of Propositions
\ref{wdim=dim} and \ref{main}(b). Before that we need two lemmas.

\begin{lem}\label{lem*} For a
diagram of type $(\square^*)$ we have:
\begin{itemize}
\item[(a)]
$w\text{-}\dim(R)=\max\{w\text{-}\dim(T),w\text{-}\dim(D)+\dim(T_M)\}$,

\item[(b)] $w\text{-}\dim_v(R)=\max\{w\text{-}\dim_v(T),w\text{-}\dim_v(D)+\dim_v(T_M)\}$.
\end{itemize}
\end{lem}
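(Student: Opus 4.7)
The plan is to combine the localization principles (Propositions \ref{loc} and \ref{v}) with the quasilocal pullback formulas (Propositions \ref{wdim=dim} and \ref{main}(b)) and Lemma \ref{w}. For each $w$-maximal ideal $\fq$ of $R$ either $M\subseteq\fq$ or $M\nsubseteq\fq$, and the two cases will account respectively for the two terms in the $\max$ on the right-hand side.

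For part (a), I would first establish the lower bound by exhibiting chains. On one hand, a maximal $w$-prime chain in $T$ contracts to a chain of primes of $R$ of the same length: each contraction is a $w$-prime of $R$ (with $M$ itself being $w$-prime via $(R:T)=M$), so $w\text{-}\dim(R)\geq w\text{-}\dim(T)$. On the other hand, concatenating a saturated prime chain of $T_M$ ending at $M$ (all of whose terms are $w$-primes of $R$ as subprimes of the $w$-prime $M$) with the $\varphi^{-1}$-image of a maximal $w$-prime chain of $D$ (a $w$-prime chain in $R$ by Lemma \ref{w}) yields $w\text{-}\dim(R)\geq\dim(T_M)+w\text{-}\dim(D)$.

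For the upper bound I would use $w\text{-}\dim(R)=\sup\{\h(\fq):\fq\in w\text{-}\Max(R)\}$. If $M\subseteq\fq$, localize the pullback at $\fq$; the key step is to identify the resulting diagram with a pullback of type $(\square^*)$ whose lower-left corner is the quasilocal ring $T_M$ (since $R\setminus\fq\subseteq T\setminus M$, plus a direct verification that $R_\fq$ equals the pullback of $D_P\to k\leftarrow T_M$). Then Proposition \ref{wdim=dim} yields $\h(\fq)=\dim(T_M)+\h(\fq/M)\leq\dim(T_M)+w\text{-}\dim(D)$. If $M\nsubseteq\fq$, the standard fact $R_\fq=T_Q$ for the unique $Q\in\Spec(T)$ above $\fq$ gives $\h(\fq)=\h_T(Q)\leq w\text{-}\dim(T)$, provided $Q$ is a $w$-prime of $T$. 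Part (b) follows the same template, substituting Proposition \ref{v} for the local formula, Proposition \ref{main}(b) for the quasilocal pullback calculation, and valuative dimensions throughout; in particular, choosing $P\in w\text{-}\Max(D)$ realizing $w\text{-}\dim_v(D)$ and applying Proposition \ref{main}(b) to the local pullback at $\fq=\varphi^{-1}(P)$ gives $w\text{-}\dim_v(R)\geq w\text{-}\dim_v(D)+\dim_v(T_M)$.

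The main obstacle is the compatibility of $w$-primes off of $M$: one must verify that for $Q\in\Spec(T)\setminus\{M\}$, $Q$ is a $w$-prime of $T$ if and only if its contraction $\fq=Q\cap R$ is a $w$-prime of $R$. I expect the cleanest argument to proceed by a direct $w$-closure computation in the spirit of \cite[Lemma 3.1]{Mim}, or by exploiting the fact that $R\subseteq T$ behaves like a flat extension away from $M$, so that the $w$-operations on $R$ and on $T$ agree on the relevant ideals. A secondary technical point is the identification of the localized pullback at $\fq$ (with $M\subseteq\fq$) as a type $(\square^*)$ pullback with quasilocal base $T_M$, which is required before either Proposition \ref{wdim=dim} or Proposition \ref{main}(b) can be invoked.
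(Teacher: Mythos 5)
Your proposal follows essentially the same route as the paper's proof: reduce to the supremum of $\dim(R_P)$ (resp.\ $\dim_v(R_P)$) over $P\in w\text{-}\Max(R)$, split according to whether $P$ contains $M$, and use the height formula $\h(P)=\dim(T_M)+\h(P/M)$ together with Lemma~\ref{w} in the first case and the identification $R_P=T_Q$ in the second. The one ingredient you flag as the ``main obstacle'' --- that for $Q\in\Spec(T)$ incomparable to $M$ the $w$-(maximal) property transfers between $Q$ and $Q\cap R$ --- is exactly what the paper supplies by citing \cite[Lemma 3.3]{FGH} and \cite[Theorem 2.6(2)]{GH1}, so your plan goes through as written.
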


\begin{proof} (a) We have
$w\text{-}\dim(R)=\sup\{\dim(R_P)|P\in w\text{-}\Max(R)\}$. Now let
$P\in w\text{-}\Max(R)$ such that $w\text{-}\dim(R)=\dim(R_P)$. If
$P\not\supset M$ then $R_P=T_Q$ for some $Q\in\Spec(T)$ such that
$P=Q\cap R$. Thus $Q$ and $M$ are incomparable prime ideals of $T$.
Hence using \cite[Lemma 3.3]{FGH}, we see that $Q$ is a $w$-maximal
ideal of $T$. On the other hand if $P\supseteq M$, then
$\dim(R_P)=\dim(D_Q)+\dim(T_M)$ for some $Q\in w\text{-}\Max(R)$
such that $P=\varphi^{-1}(Q)$. Then we have the inequality $\leq$ in
(a). We have two cases to consider:

$1^\circ$ If $P\not\supset M$ then $R_P=T_Q$ for some $Q\in
w\text{-}\Max(R)$ such that $P=Q\cap R$. We claim that
$w$-$\dim(T)=\dim(T_Q)$. Suppose the contrary that there exists
$L\in w\text{-}\Max(R)$ such that $w$-$\dim(T)=\dim(T_L)$ and
$\dim(T_Q)\lneq\dim(T_L)$. Set $P_1:=L\cap R$. Consequently
$R_{P_1}=T_L$ by \cite[Proposition 1.11]{GH1}. Thus
$w\text{-}\dim(R)=\dim(R_P)=\dim(T_Q)\lneq\dim(T_L)=\dim(R_{P_1})$.
This implies that $P_1$ is not a $w$-ideal of $R$ contradicting
\cite[Theorem 2.6(2)]{GH1}. Thus in this case we have the equality
in (a).

$2^\circ$ If $P\supseteq M$, then $\dim(R_P)=\dim(D_Q)+\dim(T_M)$
for some $Q\in w\text{-}\Max(D)$ such that $P=\varphi^{-1}(Q)$ (by
Proposition \ref{wdim=dim}). We claim that
$w$-$\dim(D)=\dim(D_Q)$. Suppose the contrary that there exists
$L\in w\text{-}\Max(D)$ such that $w$-$\dim(D)=\dim(D_L)$ and
$\dim(D_Q)\lneq\dim(D_L)$. Set $P_1:=\varphi^{-1}(L)$.
Consequently $\dim(R_{P_1})=\dim(D_L)+\dim(T_M)$. Thus
$w\text{-}\dim(R)=\dim(R_P)=\dim(D_Q)+\dim(T_M)\lneq\dim(D_L)+\dim(T_M)=\dim(R_{P_1})$.
This implies that $P_1$ is not a $w$-ideal of $R$ contradicting
Lemma \ref{w}. Thus in this case again we have the equality in
(a).

(b) We have $w\text{-}\dim_v(R)=\sup\{\dim_v(R_P)|P\in
w\text{-}\Max(R)\}$ by Proposition \ref{v}. The rest of the proof
is the same as part (a).
\end{proof}

\begin{lem}\label{lemf} For a
diagram of type $(\square)$ assume that $D=F$ is a field and let
$d=\td(k/F)$. Then
\begin{itemize}
\item[(a)] $w\text{-}\dim(R)=\max\{w\text{-}\dim(T), \dim(T_M)\}$,

\item[(b)] $w\text{-}\dim_v(R)=\max\{w\text{-}\dim_v(T), \dim_v(T_M)+d\}$.
\end{itemize}
\end{lem}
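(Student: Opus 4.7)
My plan is to mimic the proof of Lemma \ref{lem*}, with Theorem \ref{main1}(a) supplying the ``type 2'' computation now that the quotient field $F$ of $D$ need not equal $k$. The starting point is the formula
$$
w\text{-}\dim(R) = \sup\{\dim(R_P) : P\in w\text{-}\Max(R)\}
$$
together with its valuative analogue from Proposition \ref{v}. Since $D = F$ is a field, $R/M\cong F$ is a field, so $M$ is the unique prime of $R$ containing $M$, and $M$ is $w$-maximal. Consequently every $P\in w\text{-}\Max(R)$ is either $M$ itself or satisfies $M\not\subseteq P$, and this dichotomy structures the whole argument.

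If $M\not\subseteq P$ then \cite[Proposition 1.11]{GH1} gives $R_P = T_Q$ for the unique $Q\in\Spec(T)$ above $P$, and \cite[Lemma 3.3]{FGH} forces $Q\in w\text{-}\Max(T)$; hence $\dim(R_P)\leq w\text{-}\dim(T)$ and $\dim_v(R_P)\leq w\text{-}\dim_v(T)$. If $P = M$, then by \cite[Proposition 1.9]{Fon} the localization $R_M$ fits into a diagram of type $(\square)$ formed by $F$, $T_M$, $k$, with $T_M$ quasilocal. Then \cite[Proposition 2.1(5)]{Fon} yields $\dim(R_M) = \dim(T_M)$, and Theorem \ref{main1}(a) applied to $R_M$ gives
$$
w\text{-}\dim_v(R_M) = w\text{-}\dim_v(F) + \dim_v(T_M) + \td(k/F) = \dim_v(T_M) + d.
$$
By the remark after Proposition \ref{wdim=dim}, $R_M$ (being a quasilocal pullback of a field with $T_M$) is a DW domain, so every prime of $R_M$ is $w$-prime; in particular $MR_M$ is $w$-maximal, and Proposition \ref{v} collapses to $\dim_v(R_M) = w\text{-}\dim_v(R_M) = \dim_v(T_M) + d$. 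Taking the sup over $P\in w\text{-}\Max(R)$ then yields the inequalities $\leq$ in both (a) and (b).

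For the reverse inequalities, the case $P = M$ immediately gives $w\text{-}\dim(R)\geq \dim(T_M)$ and $w\text{-}\dim_v(R)\geq \dim_v(T_M) + d$. To also obtain $w\text{-}\dim(R)\geq w\text{-}\dim(T)$ and $w\text{-}\dim_v(R)\geq w\text{-}\dim_v(T)$, for each $Q\in w\text{-}\Max(T)$ set $P := Q\cap R$, which is a $w$-prime of $R$ (either as a subideal of the $w$-prime $M$ when $Q\subseteq M$, or via \cite[Theorem 2.6(2)]{GH1} otherwise), extend to some $P'\in w\text{-}\Max(R)$ containing $P$, and observe $\dim(R_{P'})\geq \h(P) = \dim(T_Q)$ together with $\dim_v(R_{P'})\geq \dim_v(R_P) = \dim_v(T_Q)$. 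The main subtlety beyond the argument for Lemma \ref{lem*} is this last valuative monotonicity: since $R_{P'}\subseteq R_P$, every valuation overring of $R_P$ is also a valuation overring of $R_{P'}$, whence $\dim_v(R_{P'})\geq \dim_v(R_P)$.
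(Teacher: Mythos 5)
Your proof is correct and follows essentially the same route as the paper's: the same dichotomy on $w$-maximal ideals ($P=M$ versus $P\not\supseteq M$), with \cite[Proposition 1.11]{GH1}, \cite[Lemma 3.3]{FGH}, \cite[Theorem 2.6(2)]{GH1} and \cite[Proposition 2.1(5)]{Fon} doing the same work. The only cosmetic differences are that you establish the lower bound directly (lifting $w$-maximal ideals of $T$ and using monotonicity of $\dim_v$ under localization) where the paper argues by contradiction, and that you derive $\dim_v(R_M)=\dim_v(T_M)+d$ from Theorem \ref{main1}(a) combined with the DW property of $R_M$ instead of quoting \cite[Theorem 2.11(b)]{ABDFK}.
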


\begin{proof} (a) Note that $M$ is a $w$-prime ideal of $R$. Then we have
$w\text{-}\dim(R)=\max\{\sup\{\dim(R_P)|P\in w\text{-}\Max(R),\text{
and }P\not\supset M \},\dim(R_M)\}$. Like Lemma \ref{lem*} the
inequality $\leq$ holds. Let $P\in w\text{-}\Max(R)$ such that
$w\text{-}\dim(R)=\dim(R_P)$. If $P=M$, then we have
$\dim(R_P)=\dim(T_M)$ by \cite[Proposition 2.1(5)]{Fon}. If not we
have $P\not\supset M$. Then $R_P=T_Q$ for some $Q\in\Spec(T)$ such
that $P=Q\cap R$. Using \cite[Lemma 3.3]{FGH}, we see that $Q$ is a
$w$-maximal ideal of $T$. We claim that $w$-$\dim(T)=\dim(T_Q)$.
Suppose the contrary that there exists $L\in w\text{-}\Max(R)$ such
that $w$-$\dim(T)=\dim(T_L)$ and $\dim(T_Q)\lneq\dim(T_L)$. Set
$P_1:=L\cap R$. If $P_1\not\supset M$ then $R_{P_1}=T_L$. Thus
$w\text{-}\dim(R)=\dim(R_P)=\dim(T_Q)\lneq\dim(T_L)=\dim(R_{P_1})$.
This implies that $P_1$ is not a $w$-ideal. But if $L\subseteq M$
then $P_1\subseteq M$ and hence $P_1$ is a $w$-prime ideal which is
a contradiction. So that $L\not\subset M$. Thus $P_1$ is a $w$-prime
ideal by \cite[Theorem 2.6(2)]{GH1} which is again a contradiction.
Therefore $w\text{-}\dim(R)=\dim(R_P)=\dim(T_Q)=w\text{-}\dim(T)$.

(b) It is the same as part (a) noting that we have
$$
w\text{-}\dim_v(R)=\max\{\sup\{\dim_v(R_P)|P\in w\text{-}\Max(R),\text{
and }P\not\supset M \},\dim_v(R_M)\},
$$ by Proposition \ref{v} and using \cite[Theorem
2.11(b)]{ABDFK} instead of \cite[Proposition 2.1(5)]{Fon}.
\end{proof}

By combining Lemmas \ref{lem*} and \ref{lemf} we have:

\begin{thm}\label{main2} For a
diagram of type $(\square)$, let $F=qf(D)$ and $d:=\td(k/F)$.
Then:
\begin{itemize}
\item[(a)] $w\text{-}\dim(R)=\max\{w\text{-}\dim(T),w\text{-}\dim(D)+\dim(T_M)\}$.

\item[(b)] $w\text{-}\dim_v(R)=\max\{w\text{-}\dim_v(T),w\text{-}\dim_v(D)+\dim_v(T_M)+d\}$.
\end{itemize}
\end{thm}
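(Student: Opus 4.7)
The plan is to factor the given type-$(\square)$ pullback through an intermediate ring and reduce the problem to the two preceding lemmas, exactly as was done in the proof of Theorem \ref{main1}. Namely, set $F=qf(D)$ and $S:=\varphi^{-1}(F)$, so that the original diagram splits as the vertical composition of an upper square with rows $R\to D$ above $S\to F$, and a lower square with rows $S\to F$ above $T\to k$. The upper square is of type $(\square^*)$ (since $qf(D)=F$ by construction), while the lower square is of type $(\square)$ with $D$ replaced by the field $F$; crucially, $M=\ker(\varphi)$ plays the role of the distinguished maximal ideal in both sub-squares.

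First I would apply Lemma \ref{lem*} to the upper square, obtaining
\begin{align*}
w\text{-}\dim(R) &= \max\{w\text{-}\dim(S),\, w\text{-}\dim(D)+\dim(S_M)\},\\
w\text{-}\dim_v(R) &= \max\{w\text{-}\dim_v(S),\, w\text{-}\dim_v(D)+\dim_v(S_M)\}.
\end{align*}
Next, applying Lemma \ref{lemf} to the lower square yields
\begin{align*}
w\text{-}\dim(S) &= \max\{w\text{-}\dim(T),\, \dim(T_M)\},\\
w\text{-}\dim_v(S) &= \max\{w\text{-}\dim_v(T),\, \dim_v(T_M)+d\}.
\end{align*}

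For the remaining ingredients $\dim(S_M)$ and $\dim_v(S_M)$, I would localize the lower square at $M$; this produces a quasilocal $(\square)$-diagram whose bottom-right corner is the field $F$. Then \cite[Proposition 2.1(5)]{Fon} gives $\dim(S_M)=\dim(F)+\dim(T_M)=\dim(T_M)$, and \cite[Proposition 2.5]{ABDFK} gives $\dim_v(S_M)=\dim_v(T_M)+\td(k/F)=\dim_v(T_M)+d$.

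Finally, substituting these four identities into the formulas coming from Lemma \ref{lem*} and using that $w\text{-}\dim(D)\geq 0$ (respectively $w\text{-}\dim_v(D)\geq 0$) to absorb the bare $\dim(T_M)$ (respectively $\dim_v(T_M)+d$) term into the larger expression $w\text{-}\dim(D)+\dim(T_M)$ (respectively $w\text{-}\dim_v(D)+\dim_v(T_M)+d$) produces exactly the formulas (a) and (b). I do not expect a serious obstacle here: the two preceding lemmas together with the classical pullback dimension identities of Fontana and of Anderson--Bouvier--Dobbs--Fontana--Kabbaj do the real work, so the only care needed is to verify that the splitting preserves the relevant ideal-theoretic data (in particular the role of $M$), after which the max-absorption step is routine bookkeeping.
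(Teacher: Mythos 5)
Your proposal is correct and is essentially the paper's own argument: the paper proves Theorem \ref{main2} by "combining" Lemmas \ref{lem*} and \ref{lemf}, which amounts to exactly your splitting through $S=\varphi^{-1}(F)$ (the same decomposition used in Theorem \ref{main1}), followed by the substitution and max-absorption you describe. Your identification of $\dim(S_M)=\dim(T_M)$ and $\dim_v(S_M)=\dim_v(T_M)+d$ via \cite[Proposition 2.1(5)]{Fon} and \cite[Theorem 2.11(b)]{ABDFK} is also the same device the paper uses inside the proof of Lemma \ref{lemf}, so nothing is missing.
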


An integral domain $R$ is said to be a \emph{$w$-locally Jaffard
domain} if $R_P$ is a Jaffard domain for each $w$-prime ideal $P$
of $R$. It is easy to see that a $w$-locally Jaffard domain of
finite $w$-valuative dimension is a $w$-Jaffard domain. Now we
have the following corollary which is $w$-analogue of
\cite[Corollary 2.12]{ABDFK}.

\begin{cor}\label{main3} For a
diagram of type $(\square)$, let $F$ be the quotient field of $D$.
Then:
\begin{itemize}
\item[(a)] $R$ is a $w$-locally Jaffard domain $\Leftrightarrow$ $D$ and $T$ are $w$-locally Jaffard
domains, and $k$ is algebraic over $F$.

\item[(b)] If $T$ is a $w$-locally Jaffard domain with $w$-$\dim_v(T)<\infty$, $D$ is a $w$-Jaffard
domain, and $k$ is algebraic over $F$, then $R$ is a $w$-Jaffard
domain.
\end{itemize}
\end{cor}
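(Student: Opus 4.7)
The plan is to classify the $w$-prime ideals $P$ of $R$ by their relation to the conductor $M$, and then reduce each local ring $R_P$ either to a quasilocal pullback (as in Proposition~\ref{main}) or to a localization of $T$; the Jaffard claims then follow from \cite[Theorem~2.6]{ABDFK}. Two cases arise. When $M\subseteq P$, Lemma~\ref{w} gives $P=\varphi^{-1}(L)$ for a $w$-prime $L$ of $D$, and localizing the diagram of type $(\square)$ at $P$ yields the pullback
\[
\xymatrix{ R_P \ar[r] \ar[d] & D_L \ar[d] \\ T_M \ar[r] & k }
\]
with $T_M$ quasilocal. When $M\not\subseteq P$, the standard pullback correspondence \cite[Proposition~1.11]{GH1} together with the $w$-compatibility used repeatedly in Lemmas~\ref{lem*} and~\ref{lemf} (via \cite[Theorem~2.6(2)]{GH1} and \cite[Lemma~3.3]{FGH}) gives $R_P=T_Q$ for a unique $w$-prime $Q$ of $T$ with $Q\cap R=P$.

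For part~(a), in the direction $(\Leftarrow)$ I would take any $w$-prime $P$ of $R$ and verify $R_P$ Jaffard by cases: if $M\not\subseteq P$, then $R_P=T_Q$ is Jaffard since $T$ is $w$-locally Jaffard; if $M\subseteq P$, apply \cite[Theorem~2.6]{ABDFK} to the displayed pullback, using that $D_L$ is Jaffard (from $D$ $w$-locally Jaffard), $T_M$ is Jaffard (from $T$ $w$-locally Jaffard, with $M$ a $w$-prime of $T$), and $k$ is algebraic over $qf(D_L)=F$. For $(\Rightarrow)$, run the same reductions in reverse: each $w$-prime $L$ of $D$ lifts to a $w$-prime $P=\varphi^{-1}(L)$ of $R$, and \cite[Theorem~2.6]{ABDFK} extracts $D_L$ and $T_M$ Jaffard; in particular $L=(0)$ yields $P=M$ and forces $k/F$ algebraic. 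Each $w$-prime $Q$ of $T$ with $M\not\subseteq Q$ descends to $P=Q\cap R\in w\text{-}\Spec(R)$, so $T_Q=R_P$ is Jaffard, completing $T$ $w$-locally Jaffard.

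For part~(b), combine Theorem~\ref{main2} with the hypotheses: since a $w$-locally Jaffard domain of finite $w$-valuative dimension is $w$-Jaffard (as remarked just before the corollary), $w\text{-}\dim(T)=w\text{-}\dim_v(T)<\infty$ and $\dim(T_M)=\dim_v(T_M)<\infty$; $D$ being $w$-Jaffard gives $w\text{-}\dim(D)=w\text{-}\dim_v(D)<\infty$; and $k/F$ algebraic forces $d:=\td(k/F)=0$. Substituting into the formulas
\[
w\text{-}\dim(R)=\max\{w\text{-}\dim(T),\, w\text{-}\dim(D)+\dim(T_M)\}
\]
and
\[
w\text{-}\dim_v(R)=\max\{w\text{-}\dim_v(T),\, w\text{-}\dim_v(D)+\dim_v(T_M)+d\}
\]
of Theorem~\ref{main2} collapses the two right-hand sides to a common finite value, so $R$ is $w$-Jaffard. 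The principal obstacle in both parts is the handling of $T_M$ inside the pullback: namely, that $T$ $w$-locally Jaffard indeed supplies $T_M$ Jaffard. This hinges on $M$ being a $w$-prime of $T$, which is not automatic for an arbitrary maximal ideal, but should follow from the divisoriality of the conductor $M=(R:T)$ and the $w$-correspondence across the pullback; this is the step that requires the most careful verification.
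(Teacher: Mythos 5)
The paper states this corollary without an explicit proof, as an immediate consequence of Theorem \ref{main2}; your derivation of (b) by substituting the hypotheses into the two formulas of that theorem, and your case division in (a) of the $w$-primes of $R$ according to whether they contain the conductor $M$, follow exactly that intended route.

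The genuine gap is the one you flag in your final sentence, and your proposed repair does not close it. In both (a)$(\Leftarrow)$ and (b) you need $T_M$ to be a Jaffard domain (so that $\dim_v(T_M)=\dim(T_M)$ in (b), respectively so that $R_P$ is Jaffard for $w$-primes $P\supseteq M$ in (a)), and you extract this from the hypothesis that $T$ is $w$-locally Jaffard. That hypothesis only controls $T_Q$ for $Q$ a $w$-prime \emph{of $T$}, so you need $M$ to be a $w$-maximal ideal of $T$. The divisoriality of the conductor that you invoke says that $M=(R:T)$ is a divisorial, hence $w$-prime, ideal \emph{of $R$}; it says nothing about $M$ as an ideal of $T$, and in general $M$ is not a $t$-prime of $T$: in the paper's own Example \ref{nonSM}, $T=\mathbb{K}[X,Y]$ and $M=(X,Y)T$ is a height-two maximal ideal of a Krull domain, so $M^t=T$. (There $T_M$ is Jaffard for the unrelated reason that $T$ is Noetherian.) Consequently the step ``$T$ $w$-locally Jaffard $\Rightarrow$ $T_M$ Jaffard'' is unjustified precisely when $M\notin t$-$\Max(T)$, and with it the collapse of $\max\{w\text{-}\dim_v(T),\,w\text{-}\dim_v(D)+\dim_v(T_M)+d\}$ onto $\max\{w\text{-}\dim(T),\,w\text{-}\dim(D)+\dim(T_M)\}$ in (b). A complete argument must either prove that $T_M$ is Jaffard under the stated hypotheses (for instance by treating the case $M^t=T$ separately and showing its contribution is dominated), or else isolate ``$T_M$ is Jaffard'' as the condition actually being used. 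As written, your proof does not establish the corollary. A minor further point: the localized diagram for $M\subseteq P$ should have $T_{R\setminus P}$, not $T_M$, in its lower left corner (\cite[Proposition 1.9]{Fon}); this does not affect the dimension counts, which still involve $\dim(T_M)$ and $\dim_v(T_M)$ via \cite[Theorem 2.11]{ABDFK}, but the diagram as drawn is not the pullback describing $R_P$.
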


A ``global'' type of $D+M$ constructions arise from $T=K[X]$, the
polynomial ring over a field $K$, by considering $M=XT$ and a
subring $D$ of $K$. In this case neither $T$ nor $R$ is quasilocal.
Theorem \ref{main2} yields:

\begin{cor}\label{g} Let $K$ be a field, $D$ a subring of $K$ with quotient field
$F$, $R=D+XK[X]$ and $d=\td(K/F)$. Then:
\begin{itemize}
\item[(a)] $w\text{-}\dim(R)=w\text{-}\dim(D)+1$.

\item[(b)] $w\text{-}\dim_v(R)=w\text{-}\dim_v(D)+d+1$.

\item[(c)] $R$ is a $w$-Jaffard domain $\Leftrightarrow$ $D$ is a $w$-Jaffard
domain and $K$ is algebraic over $F$.
\end{itemize}
\end{cor}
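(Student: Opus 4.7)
The plan is to derive this corollary as a direct specialization of Theorem \ref{main2}, with $T:=K[X]$, $M:=XT$, so that $T/M\cong K$; thus $k=K$ and the transcendence degree $\td(k/F)$ in Theorem \ref{main2} coincides with the $d$ defined here. The setup satisfies the hypotheses of a diagram of type $(\square)$ provided $D\subsetneq K$, which is part of the standing assumption that $R\subsetneq T$.

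First I would record the numerical data about $T$. Since $T=K[X]$ is a PID, it is Dedekind (even a UFD), so every nonzero ideal is invertible; hence the $w$-operation coincides with the identity, giving $w\text{-}\dim(T)=\dim(T)=1$ and $w\text{-}\dim_v(T)=\dim_v(T)=1$ (for a Noetherian $1$-dimensional domain, $\dim=\dim_v$). Similarly $T_M=K[X]_{(X)}$ is a DVR, so $\dim(T_M)=\dim_v(T_M)=1$. Plugging these into Theorem \ref{main2}(a) and (b) yields
\[
w\text{-}\dim(R)=\max\{1,\,w\text{-}\dim(D)+1\}=w\text{-}\dim(D)+1,
\]
\[
w\text{-}\dim_v(R)=\max\{1,\,w\text{-}\dim_v(D)+1+d\}=w\text{-}\dim_v(D)+d+1,
\]
since the second term dominates in both cases (the inequality $w\text{-}\dim(D)\geq 0$, respectively $w\text{-}\dim_v(D)+d\geq 0$, makes this automatic). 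This establishes (a) and (b).

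For part (c), I would combine (a) and (b) with the general inequalities $w\text{-}\dim(D)\leq w\text{-}\dim_v(D)$ (recalled in the excerpt). The domain $R$ is $w$-Jaffard precisely when $w\text{-}\dim(R)=w\text{-}\dim_v(R)<\infty$, i.e.
\[
w\text{-}\dim(D)=w\text{-}\dim_v(D)+d\quad\text{and both sides finite.}
\]
Since $w\text{-}\dim(D)\leq w\text{-}\dim_v(D)$, this forces $d=0$ (so $K$ is algebraic over $F$) together with $w\text{-}\dim(D)=w\text{-}\dim_v(D)<\infty$ (so $D$ is $w$-Jaffard). Conversely, if $D$ is $w$-Jaffard and $K$ is algebraic over $F$, both quantities are finite and equal to $w\text{-}\dim(D)+1$, so $R$ is $w$-Jaffard.

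There is no real obstacle here: the whole argument is a bookkeeping consequence of Theorem \ref{main2} once the invariants of $T=K[X]$ are identified. The only point that warrants care is confirming the equality $w\text{-}\dim_v(T)=1$ for $T=K[X]$ (which uses that $K[X]$ is a PID and Jaffard), and checking that the maxima reduce to their second arguments; both are routine.
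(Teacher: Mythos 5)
Your proposal is correct and is essentially the paper's own argument: the paper states Corollary~\ref{g} as an immediate specialization of Theorem~\ref{main2} to $T=K[X]$, $M=XT$ (giving $k=K$), with the same identification of the invariants $w\text{-}\dim(T)=w\text{-}\dim_v(T)=\dim(T_M)=\dim_v(T_M)=1$ and the same reduction of the maxima and of part (c). No discrepancy to report.
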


\section{Examples}

It is well known that \cite[Theorem 6.7.8]{FHP} a finite
dimensional domain $R$ has Pr\"{u}fer integral closure if and
only if each overring of $R$ is a Jaffard domain. Similarly in
\cite{Sq} we showed that a finite $w$-dimensional domain $R$ has
Pr\"{u}fer integral closure if and only if each overring of $R$
is a $w$-Jaffard domain. Thus in particular each overring of a
finite dimensional domain is Jaffard if and only if each overring
is $w$-Jaffard. In the following two examples we show that the
classes of $w$-Jaffard and Jaffard domains are incomparable.

The next example gives a positive answer to our question in
\cite[page 238]{S}, which asked that ``is it possible to find a
$w$-Jaffard non-Jaffard domain?'' There is an old question (see
\cite{BK}) asking if is it possible to find a UFD (or a Krull
domain) which is not Jaffard. We note that if there exists a Krull
domain which is not Jaffard, then we do have an example of a
$w$-Jaffard domain which is not Jaffard. But to the best of
author's knowledge there is not such an example.

\begin{exam}\label{K} For each $n\geq 3$ there is an integral domain $R_n$
which is $w$-Jaffard of $w$-$\dim(R_n)=n$ but not a Jaffard
domain.

\noindent Let $K$ be a field and let $W,X,Y,Z$ be indeterminates
over $K$. Put $L=K(W,X,Y,Z)$. Now, $V_1=K(W,X,Z)+M_1$, where
$M_1=YK(W,X,Z)[Y]_{(Y)}$, is a (discrete) rank 1 valuation domain of
$L$ with maximal ideal $M_1$. Let $(V,M)$ be a rank 1 valuation
domain of the form $V=K(W,X,Y)+M$, where $M=ZK(W,X,Y)[Z]_{(Z)}$.
With $\tau$ denoting the canonical surjection $V\to K(W,X,Y)$,
consider the pullback $V'=\tau^{-1}(K(W,X)[Y]_{(Y+1)})=K(W,X)+M'$
where $M'=(Y+1)K(W,X)[Y]_{(Y+1)}+ZK(W,X,Y)[Z]_{(Z)}$. Thus
$\dim(V')=2$. Finally with $\psi$ denoting the canonical surjection
$V'\to K(W,X)$, consider the pullback
$V_2=\psi^{-1}(K(W)[X]_{(X)})=K(W)+M_2$, where
$M_2=XK(W)[X]_{(X)}+(Y+1)K(W,X)[Y]_{(Y+1)}+ZK(W,X,Y)[Z]_{(Z)}$, is a
valuation domain of $L$ with maximal ideal $M_2$ and we have
$\dim(V_2)=3$. Further, $V_1$ and $V_2$ are incomparable. If not, it
would follow from the one-dimensionality of $V_1$ that $V_2\subset
V_1$. Then we would have $V_1=(V_2)_M$, whence
$YK(W,X,Z)[Y]_{(Y)}=M_1=M(V_2)_M=M$ and $1=YY^{-1}\in MV=M$, a
contradiction. Thus $V_1$ and $V_2$ are incomparable. Then
$T:=V_1\cap V_2$ is a three dimensional Pr\"{u}fer domain with
$\fm_1:=M_1\cap T$ and $\fm_2:=M_1\cap T$ as maximal ideals such
that $T_{\fm_1}=V_1$ and $T_{\fm_2}=V_2$ by \cite[Theorem
11.11]{Na}. With $\varphi:T\to T/\fm_1(\cong V_1/M_1\cong K(W,X,Z))$
denoting the canonical surjection, consider the pullback
$R_3:=\varphi^{-1}(K[W,X])$. Notice that $T$ is a DW-domain since it
is a Pr\"{u}fer domain, $d:=\td(K(W,X,Z)/K(W,X))=1$, and that
$K[W,X]$ is a Noetherian Krull domain. In particular $K[W,X]$ is a
Jaffard domain (of dimension 2) and $w$-Jaffard domain (of
$w$-dimension 1). Thus using Theorem \ref{main2} we have:
\begin{align*}
w\text{-}\dim(R_3)=&\max\{w\text{-}\dim(T),w\text{-}\dim(K[W,X])+\dim(T_{\fm_1})\} \\[1ex]
                = & \max\{3,1+1\}=3, \text{ and}\\[1ex]
w\text{-}\dim_v(R_3)=&\max\{w\text{-}\dim_v(T),w\text{-}\dim_v(K[W,X])+\dim_v(T_{\fm_1})+d\} \\[1ex]
   = & \max\{3,1+1+1\}=3.
\end{align*}
This means that $R_3$ is a $w$-Jaffard domain of $w$-dimension 3.
But by \cite[Theorem 2.11]{ABDFK} we have
\begin{align*}
\dim(R_3)=&\max\{\dim(T),\dim(K[W,X])+\dim(T_{\fm_1})\} \\[1ex]
       = & \max\{3,2+1\}=3, \text{ and}\\[1ex]
\dim_v(R_3)=&\max\{\dim_v(T),\dim_v(K[W,X])+\dim_v(T_{\fm_1})+d\} \\[1ex]
         = & \max\{3,2+1+1\}=4.
\end{align*}
Therefore $R_3$ is not a Jaffard domain.

Now set $F:=qf(R_3)$. Suppose that $V:=F+M$ is a rank 1 valuation
domain with maximal ideal $M$. Set $R_4:=R_3+M$. It is easy to
see that $R_4$ is $w$-Jaffard of $w$-$\dim(R_4)=4$,
$\dim(R_4)=4$, and $\dim_v(R_4)=5$. Iterating in the same way we
obtain $R_n$ with desired properties.
\end{exam}

\begin{exam}\label{Ko} For each $n\geq 2$ there is an integral domain $R_n$
which is Jaffard of $\dim(R_n)=n$ but not a $w$-Jaffard domain.

\noindent Let $K$ be a field and let $X,Y,Z$ be indeterminates over
$K$. Let $C:=K[X,Y,Z]$ and set $P:=(X)$ and $Q:=(Y,Z)$. Let $T:=C_S$
where $S:=C\backslash (P\cup Q)$, which is a multiplicatively closed
subset of $C$. Then $\Max(T)=\{PT,QT\}$, $\dim(T_{PT})=1$ and
$\dim(T)=\dim(T_{QT})=2$. Next notice that we have a surjective ring
homomorphism $\psi:C_P\to K(Y,Z)$ sending $f/g\mapsto
f(0,Y,Z)/g(0,Y,Z)$, with $\Ker(\psi)=PC_P$. Thus we have $T/PT\cong
C_P/PC_P\cong K(Y,Z)$. With $\varphi:T\to T/PT$ denoting the
canonical surjection, consider the pullback
$R_2:=\varphi^{-1}(K(Y))$. Note that $T$ is a Noetherian Krull
domain. Thus $T$ is a 2 dimensional Jaffard domain and a $w$-Jaffard
domain of $w$-dimension 1. Also notice that $d:=\td(K(Y,Z)/K(Y))=1$.
Thus by \cite[Theorem 2.11]{ABDFK} we have
\begin{align*}
\dim(R_2)=&\max\{\dim(T),\dim(K(Y))+\dim(T_{PT})\} \\[1ex]
       =&\max\{2,0+1\}=2, \text{ and}\\[1ex]
\dim_v(R_2)=&\max\{\dim_v(T),\dim_v(K(Y))+\dim_v(T_{PT})+d\} \\[1ex]
         = & \max\{2,0+1+1\}=2.
\end{align*}
Therefore $R_2$ is a Jaffard domain of dimension 2. On the other
hand using Theorem \ref{main2} we have:
\begin{align*}
w\text{-}\dim(R_2)=&\max\{w\text{-}\dim(T),w\text{-}\dim(K(Y))+\dim(T_{PT})\} \\[1ex]
                = & \max\{1,0+1\}=1, \text{ and}\\[1ex]
w\text{-}\dim_v(R_2)=&\max\{w\text{-}\dim_v(T),w\text{-}\dim_v(K(Y))+\dim_v(T_{PT})+d\} \\[1ex]
                  =&\max\{1,0+1+1\}=2.
\end{align*}
This means that $R_2$ is not a $w$-Jaffard domain.

Now set $F:=qf(R_2)$. Suppose that $V:=F+M$ is a rank 1 valuation
domain with maximal ideal $M$. Set $R_3:=R_2+M$. It is easy to see
that $R_3$ is Jaffard of $\dim(R_3)=3$, $w$-$\dim(R_3)=2$, and
$w$-$\dim_v(R_3)=3$. Iterating in the same way we obtain $R_n$
with desired properties.
\end{exam}

Here we give our promised example of a $w$-Jaffard domain which is
not a strong Mori nor a UM$t$ domain.

\begin{exam}\label{nonSM} Let $\mathbb{Q}$ be the field of
rational numbers, $\mathbb{K}=\mathbb{Q}(\sqrt{2},\sqrt{3},\cdots)$
be an algebraic extension field of $\mathbb{Q}$ such that
$[\mathbb{K}:\mathbb{Q}]=\infty$. Let $X$ and $Y$ be indeterminates
over $\mathbb{K}$ and set $R:=\mathbb{Q}+(X,Y)\mathbb{K}[X,Y]$. Then
$R$ is a $w$-Jaffard domain of $w$-dimension 2 using Theorem
\ref{main2}, but it is not a strong Mori domain using \cite[Theorem
3.11]{Mim}. Next we claim that $R$ is not a UM$t$ domain. In fact if
$R$ is a UM$t$ domain, \cite[Corollary 3.2]{FGH} implies that
$(X,Y)$ is a $t$-prime ideal of $\mathbb{K}[X,Y]$, which is absurd
since $\mathbb{K}[X,Y]$ is a Krull domain and $(X,Y)$ has height 2.
Note that in this case $R$ is a 2 dimensional Jaffard domain.
\end{exam}

Recall that an integral domain is called a \emph{Mori domain} if it
satisfies the ascending chain condition on divisorial ideals. Every
strong Mori domain is a Mori domain. The following example is
designed to show that a Mori domain \emph{need not} be a $w$-Jaffard
domain.

\begin{exam} Let $K$ be a field and let $X, Y$ be two
indeterminates over $K$ and set $R:=K+YK(X)[Y]$. Then $R$ is not a
$w$-Jaffard domain by Corollary \ref{g}, but it is a Mori domain by
\cite[Theorem 4.18]{GH}.
\end{exam}

The following example shows that a $w$-Jaffard domain \emph{need
not} be $w$-locally Jaffard.

\begin{exam} Let $K$ be a field and $X_1,X_2$ indeterminates
over $K$. It is proved in \cite[Example 3.2(a)]{ABDFK} that there
are two incomparable valuation domains $(V_1,M_1)$ and $(V_2,M_2)$
of dimension 1 and 2 respectively. Set $T:=V_1\cap V_2$ which is a
two-dimensional Pr\"{u}fer domain with exactly two maximal ideals
$\fm_1$ and $\fm_2$ so that $T_{\fm_1}=V_1$ and $T_{\fm_2}=V_2$.
Denoting $\varphi:T\to T/\fm_1(\cong V_1/M_1\cong K(X_1,X_2))$
consider the pullback $R:=\varphi^{-1}(K(X_1))$. Since $K(X_1)$ and
$T$ are DW-domains  , \cite[Theorem 3.1(3)]{Mim1} implies that $R$
is also a DW-domain. In particular $w$-$\dim(R)=\dim(R)$ and
$w$-$\dim_v(R)=\dim_v(R)$. It follows that
$w$-$\dim(R)=\max\{2,0+1\}=2$, $w$-$\dim_v(R)=\max\{2,0+1+1\}=2$.
Thus $R$ is a $w$-Jaffard ($=$Jaffard) domain. It is observed in
\cite[Example 3.2(a)]{ABDFK} that for the prime ideal
$\fn_1:=\fm_1\cap R$ of $R$, $\dim(R_{\fn_1})=1$ and
$\dim_v(R_{\fn_1})=2$. This shows that $R$ is not $w$-locally
Jaffard.
\end{exam}

By \cite[Exercise 17(1), Page 372]{G}, for each positive integer
$n$, there exists a finite-dimensional (non-Jaffard) domain $R$
such that $\dim_v(R)-\dim(R)=n$ (see also \cite[Example
3.1(a)]{ABDFK}).

\begin{exam} For each positive integer
$n$, there exists a finite $w$-dimensional domain $R$ such that
$w\text{-}\dim_v(R)-w\text{-}\dim(R)=n$.

\noindent Indeed let $D$ be a Krull domain. Let $K$ be the
quotient field of $D$ and $\{X_1,\cdots,X_n,Y\}$ be a set of $n+1$
indeterminates over $K$. Let $L$ denote the field
$K(X_1,\cdots,X_n)$. Also define the valuation domain
$V:=L[Y]_{(Y)}=L+M$ (with $M=YV$) and the ring $R:=D+M$. Applying
Propositions \ref{wdim=dim} and \ref{main} to the pullback
description of $R$, we have $w\text{-}\dim(R)=w\text{-}\dim(D)+1$
and $w\text{-}\dim_v(R)=w\text{-}\dim_v(D)+1+n$. Since $D$ is a
Krull domain we have $w\text{-}\dim(D)=w\text{-}\dim_v(D)=1$. So
that $w\text{-}\dim_v(R)-w\text{-}\dim(R)=n$. In particular $R$
is not a $w$-Jaffard domain. Note that $\dim(R)=\dim(D)+1$ by
\cite[Proposition 2.1(5)]{Fon} and $\dim_v(R)=\dim_v(D)+1+n$ by
\cite[Theorem 2.6(a)]{ABDFK} while $w\text{-}\dim(R)=2$ and
$w\text{-}\dim_v(R)=2+n$. In particular if $\dim(D)\geq2$ then
$\dim(R)\neq w\text{-}\dim(R)$ and $\dim_v(R)\neq
w\text{-}\dim_v(R)$.
\end{exam}

We remark that in the above example if $D$ is a Dedekind domain
then $R$ is a DW-domain by \cite[Theorem 3.1(2)]{Mim1}. This
means that $\dim(R)=w\text{-}\dim(R)$ and
$\dim_v(R)=w\text{-}\dim_v(R)$. Therefore $R$ is a non-Jaffard
domain with $\dim_v(R)-\dim(R)=n$.

\section{An application}

Recall that in \cite{S2} Seidenberg proved that if $n,m$ are
positive integers such that $n+1\leq m\leq 2n+1$, there is an
integrally closed domain $R$ such that $\dim(R)=n$ and
$\dim(R[X])=m$. More recently in \cite[Theorem 2.10]{F2} Wang
showed that for any pair of positive integers $n,m$ with $1\leq
n\leq m\leq 2n$, there is an integrally closed domain $R$ such
that $w$-$\dim(R)=n$ and $w$-$\dim(R[X])=m$. By Proposition
\ref{finite} we have for an integral domain $R$ if
$n=w$-$\dim(R)$ then
$$n+1\leq w[X]\text{-}\dim(R[X])\leq 2n+1.$$
Now we show that these bounds are the best possible. We say that an
integral domain $R$ is of \emph{$w_x$-type $(n,m)$} if
$w$-$\dim(R)=n$ and $w[X]$-$\dim(R[X])=m$.

\begin{thm}\label{main4} Let $D$ be an integral domain of $w_x$-type
$(n,m)$ with quotient field $K$. Let $L$ be a purely
transcendental field extension of $K$. Then:
\begin{itemize}
\item[(a)] If $V_1=K+M_1$ is a DVR and $R_1=D+M_1$, then $R_1$ is
of $w_x$-type $(n+1,m+1)$.

\item[(b)] If $V_2=L+M_2$ is a DVR and $R_2=D+M_2$, then $R_2$ is
of $w_x$-type $(n+1,m+2)$.
\end{itemize}
\end{thm}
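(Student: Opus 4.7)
The plan is to apply the pullback dimension formulas of Propositions \ref{wdim=dim} and \ref{main}(a) to suitable $(\square^*)$-diagrams.

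For (a), the pullback defining $R_1$ is already of type $(\square^*)$ with $V_1$ quasilocal (being a DVR). So Proposition \ref{wdim=dim} yields $w\text{-}\dim(R_1) = w\text{-}\dim(D) + \dim(V_1) = n + 1$; and Proposition \ref{main}(a) with $r = 1$, combined with $\dim(V_1[X]) = 2$ (since $V_1$ is Noetherian of dimension one) and $\dim(K[X]) = 1$, gives $w[X]\text{-}\dim(R_1[X]) = m + 2 - 1 = m + 1$.

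For (b), the pullback defining $R_2$ is of type $(\square)$ but not of type $(\square^*)$, since $K = qf(D) \subsetneq L$. The $w$-dimension of $R_2$ is immediate from Proposition \ref{wdim=dim}: since $V_2$ is quasilocal, $w\text{-}\dim(R_2) = w\text{-}\dim(D) + \dim(V_2) = n + 1$. To compute $w[X]\text{-}\dim(R_2[X])$ I would use the splitting trick of Theorem \ref{main1}: insert $S := \varphi^{-1}(K) = K + M_2$ to produce an upper diagram ($R_2 \to D$, $S \to K$) of type $(\square^*)$ with $S$ quasilocal (any $s = k + m \in S$ with $k \in K \setminus \{0\}$ has inverse $s^{-1} = k^{-1} - m k^{-2} + \cdots \in K + M_2 = S$) and a lower diagram ($S \to K$, $V_2 \to L$) of type $(\square)$. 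By \cite[Proposition 2.1(5)]{Fon} applied to the lower diagram, $\dim(S) = \dim(K) + \dim(V_2) = 1$. Proposition \ref{main}(a) applied to the upper diagram now gives
\[
w[X]\text{-}\dim(R_2[X]) = w[X]\text{-}\dim(D[X]) + \dim(S[X]) - \dim(K[X]) = m + \dim(S[X]) - 1,
\]
so the task reduces to proving $\dim(S[X]) = 3$.

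The upper bound $\dim(S[X]) \leq 2\dim(S) + 1 = 3$ is Seidenberg's inequality. The matching lower bound is the main technical obstacle: I aim to construct an upper to zero $P$ of $S[X]$ that is contained in $M_2[X]$, which would produce the chain $(0) \subsetneq P \subsetneq M_2[X] \subsetneq M_2[X] + X \cdot S[X]$ of length three. Pick a transcendental element $Y \in L$ over $K$ (available since $L/K$ has positive transcendence degree), and let $Z$ be a uniformizer of $V_2$, so that $qf(S) = qf(V_2) = L(Z)$; then set $P := (X - Y^{-1})\,L(Z)[X] \cap S[X]$, which is an upper to zero because $P \cap S = (0)$. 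Proving $P \subseteq M_2[X]$ amounts to showing that whenever $h(X) = \sum h_i X^i \in L(Z)[X]$ satisfies $h(X)(X - Y^{-1}) \in S[X]$, each coefficient $h_i$ already lies in $M_2 V_2$. Expanding yields the constraints $-h_0/Y \in S$ and $h_{i-1} - h_i/Y \in S$ for $i \geq 1$ together with $h_{\deg h} \in S$; exploiting $L \cap S = K$ (from the direct-sum decomposition $V_2 = L \oplus M_2$ of $L$-modules) together with the transcendence of $Y$ over $K$ (so that no nonzero element of $K \cdot Y^j$ with $j \geq 1$ lies in $K$), a degree-by-degree induction along $h$ forces every $h_i$ into $M_2 V_2$, completing the argument.
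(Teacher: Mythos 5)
Your part (a) coincides with the paper's proof. For part (b) you take a genuinely different route. The paper works directly with chains of $w[X]$-primes in $R_2[X]$: for the lower bound it lifts a maximal chain $Q_1\subset\cdots\subset Q_m$ of $w[X]$-primes of $D[X]$ to the chain $M_2[X]\subset Q_1+M_2[X]\subset\cdots\subset Q_m+M_2[X]$ (checking $w[X]$-primeness via \cite[Lemma 2.3]{F2} and \cite[Remark 2.3]{S}) and gains one further step from the fact that $M_2[X]$ is not a minimal prime of $R_2[X]$, quoted from \cite[Theorem 19.15(2)]{G} because $(R_2)_{M_2}=K+M_2$ is not a valuation domain; for the upper bound it shows that any maximal chain of $w[X]$-primes of $R_2[X]$ contains $M_2[X]$ from its second term on and contracts the tail to $D[X]$. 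You instead interpose $S:=K+M_2$, apply Proposition \ref{main}(a) to the $(\square^*)$-diagram $R_2\to D$ over $S\to K$ so that everything reduces to the single classical computation $\dim(S[X])=3$, and then prove that by exhibiting an explicit upper to zero inside $M_2[X]$ --- in effect reproving by hand, in this special case, the content of \cite[Theorem 19.15(2)]{G} (equivalently, that $K+M_2$ is one of Seidenberg's F-rings). Your coefficientwise induction is correct: writing $h_i=\lambda_i+\mu_i$ with $\lambda_i\in L$, $\mu_i\in M_2$, the conditions force $\lambda_N\in K$, $\lambda_{i-1}-\lambda_iY^{-1}\in K$ and $\lambda_0Y^{-1}\in K$, and the $K$-linear independence of $1,Y^{-1},\dots,Y^{-N-1}$ (transcendence of $Y$) kills all the $\lambda_i$. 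What your route buys is a clean separation of the $w$-theoretic content (concentrated entirely in Proposition \ref{main}(a)) from a purely Krull-dimensional fact, together with a self-contained construction in place of the citation of Gilmer; what the paper's route buys is that it never needs the intermediate ring $S$ nor Seidenberg's bound $\dim(S[X])\le 2\dim(S)+1$, handling both inequalities uniformly inside $R_2[X]$. Two small inaccuracies in your write-up, neither fatal: $qf(V_2)$ need not equal $L(Z)$ (take $V_2=L[[Z]]$), so the upper to zero should be taken as the contraction of $(X-Y^{-1})\,qf(S)[X]$ --- the two contractions to $S[X]$ coincide anyway since $X-Y^{-1}$ has degree one; and ``$h_i\in M_2V_2$'' should simply read ``$h_i\in M_2$''.
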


\begin{proof} (a) Using Proposition \ref{wdim=dim} we have
$$
w\text{-}\dim(R_1)=w\text{-}\dim(D)+\dim(V_1)=n+1.
$$
Since $R_1$ is a pullback of a diagram of type $(\square^*)$,
Proposition \ref{main} yields that
$$
w[X]\text{-}\dim(R_1[X])=w[X]\text{-}\dim(D[X])+\dim(V_1[X])-1=m+2-1=m+1.
$$

(b) By the same way as (a) we have $w\text{-}\dim(R_2)=n+1$. Now
we compute $w[X]\text{-}\dim(R_2[X])$. If $Q_1\subset\cdots\subset
Q_m$ is a chain of $w[X]$-prime ideals of $D[X]$ of length $m$,
then
$$
M_2[X]\subset Q_1+M_2[X]\subset\cdots\subset Q_m+M_2[X]
$$
is a chain of prime ideals of $R_2[X]$ of length $m+1$. Notice
that $(Q_i+M_2[X])\cap R_2=Q_i\cap D+M_2$ for $i=1,\cdots,m$.
Since $Q_i$ is a $w[X]$-prime ideal of $D$ then $Q_i\cap D$ is a
$w$-prime ideal of $D$ (or equal to zero) by \cite[Remark 2.3]{S}.
Therefore by \cite[Lemma 2.3]{F2} we see that $Q_i\cap
D+M_2=(Q_i+M_2[X])\cap R_2$ is a $w$-prime ideal of $R_2$. Thus
using \cite[Remark 2.3]{S} we obtain that $Q_i+M_2[X]$ is a
$w[X]$-prime ideal of $R_2[X]$. On the other hand
$(R_2)_{M_2}=K+M_2$, and therefore it is not a valuation domain.
Thus \cite[Theorem 19.15(2)]{G} yields that $M_2[X]$ is not
minimal in $R_2[X]$. Therefore $w[X]\text{-}\dim(R_2[X])\geq
m+2$. We consider a chain $P_1\subset\cdots\subset P_s$ of
$w[X]$-prime ideals of $R_2[X]$ of maximal length. Since $P_2$ is
not minimal in $R_2[X]$, $P_2\cap R_2\neq(0)$. By \cite[Part (3)
of Exercise 12 Page 202]{G} $M_2$ is the unique minimal prime
ideal of $R_2$. Therefore $M_2\subseteq P_2\cap R_2$ and
$M_2[X]\subseteq P_2$. Each $P_j\cap R_2$ is a $w$-prime ideal of
$R_2$ by \cite[Remark 2.3]{S} for $j=1,\cdots,s$. Since
$(P_j/M_2[X])\cap D=(P_j/M_2[X])\cap R_2/M_2=(P_j\cap R_2)/M_2$
we claim that $(P_j\cap R_2)/M_2=(P_j/M_2[X])\cap D$ is a
$w$-prime ideal of $D$ by Lemma \ref{w}. Therefore $P_j/M_2[X]$
is a $w[X]$-prime ideal of $D[X]$ by \cite[Remark 2.3]{S}. So that
$P_3/M_2[X]\subset\cdots\subset P_s/M_2[X]$ is a chain of
$w[X]$-prime ideals of $D[X]$, and thus $s-2\leq m$. It follows
that $w[X]\text{-}\dim(R_2[X])=m+2$ completing the proof.
\end{proof}

\begin{rem} Let $D^c$ and $R_i^c$ denote the integral closures of $D$ and $R_i$ in
their quotient fields, respectively ($i=1,2$). Then $R_i^c=D^c+M_i$
by \cite[Lemma 2.6(1)]{F2}. Therefore, $R_i$ is integrally closed if
and only if $D$ is integrally closed.
\end{rem}

Following Seidenberg, we say that a domain $R$ is an \emph{F-ring}
if $\dim(R)=1$ and $\dim(R[X])=3$. By \cite[Corollary 3.6]{HZ} and
\cite[Proposition 30.14]{G}, a one-dimensional domain $R$ is an
F-ring if and only if $R$ is not a UM$t$-domain. For an F-ring,
$w$-$\dim(R)=1$ and $w[X]$-$\dim(R[X])=3$ by \cite[Corollary
3.6]{S}. Thus a F-ring is a domain of $w_x$-type $(1,3)$.

\begin{cor} For any pair of positive integers $(n,m)$ with
$n+1\leq m\leq 2n+1$, there is an integrally closed integral
domain $R$ of $w_x$-type $(n,m)$.
\end{cor}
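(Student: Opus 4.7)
The plan is to prove the corollary by induction on $n$, using Theorem \ref{main4} as the engine that lifts examples from one dimension to the next. The key point is that part (a) of Theorem \ref{main4} produces $w_x$-type $(n+1,m+1)$ from $(n,m)$, while part (b) produces $(n+1,m+2)$ from $(n,m)$, so these two constructions together cover every target pair in the admissible range.

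For the base case $n=1$ (so $m\in\{2,3\}$), I would take for $m=2$ any DVR $D$; being Krull, it is $w$-Jaffard with $w\text{-}\dim(D)=1$ and, by Proposition \ref{wdim}, $w[X]\text{-}\dim(D[X])=2$. For $m=3$, I would use an integrally closed F-ring, which exists by Seidenberg \cite{S2} (see also \cite[Exercise 17, p.~372]{G}); the paragraph preceding the corollary confirms that every F-ring has $w_x$-type $(1,3)$.

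For the inductive step, assume the result for all admissible pairs with first coordinate strictly less than $n$, and let $(n,m)$ satisfy $n+1\le m\le 2n+1$. If $n+1\le m\le 2n$, then $(n-1,m-1)$ is admissible because $n\le m-1\le 2n-1=2(n-1)+1$; I would pick an integrally closed $D$ of $w_x$-type $(n-1,m-1)$ (by the inductive hypothesis), choose a DVR $V_1=K+M_1$ with $K=qf(D)$, and set $R:=D+M_1$. Theorem \ref{main4}(a) then yields $w_x$-type $(n,m)$. If $m=2n+1$, then $(n-1,2n-1)$ is admissible since $2n-1=2(n-1)+1$ and $n\ge 2$; I would take $D$ of that type, form a purely transcendental extension $L=K(t)$ of $K=qf(D)$, choose a DVR $V_2=L+M_2$, and set $R:=D+M_2$. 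Theorem \ref{main4}(b) delivers $w_x$-type $(n,2n+1)$. Integral closure is preserved in both constructions by the remark immediately preceding the corollary ($R_i^c=D^c+M_i$).

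The only genuine obstacle is the base pair $(1,3)$: one must know an integrally closed F-ring exists, but this is exactly what Seidenberg constructed in \cite{S2}. Everything else is bookkeeping, verifying that the numerical ranges $n+1\le m'\le 2n'+1$ for the smaller pair $(n',m')=(n-1,m-1)$ or $(n-1,2n-1)$ always lie inside the inductive hypothesis.
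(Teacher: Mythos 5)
Your proposal is correct and follows essentially the same route as the paper: base cases $(1,2)$ via a PID/DVR and $(1,3)$ via Seidenberg's integrally closed F-ring, then induction powered by Theorem \ref{main4}(a) and (b), with integral closure preserved by the remark $R_i^c=D^c+M_i$. The paper leaves the inductive bookkeeping implicit ("an induction argument similar to the proof of \cite[Theorem 3]{S2}"), whereas you spell out the range checks, which are all accurate.
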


\begin{proof} A PID is an integrally closed integral
domain of $w_x$-type $(1,2)$. By \cite[Theorem 3]{S2} there is an
integrally closed F-ring. Thus by the comments before the corollary
it is of $w_x$-type $(1,3)$. So if $n=1$ the result is true. Using
Theorem \ref{main4} and by an induction argument similar to the
proof of \cite[Theorem 3]{S2}, the proof is complete.
\end{proof}

%\begin{center} {\bf ACKNOWLEDGMENT}

%\end{center}

\end{document}